\theoremstyle{plain}
\newtheorem{lemma}{Lemma}
\newtheorem{theorem}{Theorem}
\newtheorem{remark}{Remark}
\newtheorem{assumption}{Assumption}
\theoremstyle{definition}
\title{A fast front-tracking approach and its analysis for a temporal multiscale flow problem with a fractional-order boundary growth}
\author{Zhaoyang Wang\thanks{Department of Applied Mathematics, University of Science and Technology Beijing, Beijing 100083, China (zhaoyang584520@163.com)} \, 
Ping Lin\thanks{Corresponding author. Division of Mathematics, University of Dundee, Dundee DD1 4HN, United Kingdom (p.lin@dundee.ac.uk)} \, and
Lei Zhang\thanks{Institute of Natural Sciences, School of Mathematical Sciences, and MOE-LSC, Shanghai Jiao Tong University, Shanghai, China (lzhang2012@sjtu.edu.cn)}}
\affil{}
\date{\today}
\begin{document}
\maketitle

\begin{abstract}
This paper is concerned with a blood flow problem coupled with a slow plaque growth at the artery wall. In the model, the micro (fast) system is the Navier-Stokes equation with a periodically applied force and the macro (slow) system is a fractional reaction equation, which is used to describe the plaque growth with memory effect. We construct an auxiliary temporal periodic problem and an effective time-average equation to approximate the original problem and analyze the approximation error of the corresponding linearized PDE (Stokes) system, where the simple front-tracking technique is used to update the slow moving boundary. An effective multiscale method is then designed based on the approximate problem and the front tracking framework. We also present a temporal finite difference scheme with a spatial continuous finite element method and analyze its temporal discrete error. Furthermore, a fast iterative procedure is designed to find the initial value of the temporal periodic problem and its convergence is analyzed as well. Our designed front-tracking framework and the iterative procedure for solving the temporal periodic problem make it easy to implement the multiscale method on existing PDE solving software. The numerical method is implemented by a combination of the finite element platform COMSOL Multiphysics and the mainstream software MATLAB, which significantly reduce the programming effort and easily handle the fluid-structure interaction, especially moving boundaries with more complex geometries. We present two numerical examples of ODEs and 2-D Navier-Stokes system to demonstrate the effectiveness of the multiscale method. Finally, we have a numerical experiment on the plaque growth problem and discuss the physical implication of the fractional order parameter.

\medskip
\noindent{\bf Keywords}: temporal multiscale, fractional differential equation, error estimation, COMSOL with MATLAB

\medskip
\end{abstract}

\section{Introduction}
Multiscale problems have been extensively studied in the past two decades. For spatial multiscale problems, people have developed computable models such as quasi-continuum or atomistic-to-continuum coupling (AtC) models and QM (quantum mechanics) and MM (molecular mechanics) coupling to simulate material behaviors \cite{Tadmor1996, Lin2003, Shapeev2011consistent, Luskin2013, Ortner2012, Wang2021}. For temporal multiscale problems, an example is chemical reactions with concentrations of the species varying from seconds to hours while the time scale of the oscillations of the chemical bonds is in the order of femtoseconds \cite{Ariel2009}. For a general introduction to multiscale methods, we refer to \cite{Weinan2011, Engquist2009, Tadmor2011modeling}.

A common challenge in simulating these multiscale problems is the enormous computational cost when the microscale feature needs to be resolved and corresponding microscopic discretization is performed, or alternatively, the loss of microscopic information if the macroscopic discretization is performed. Macroscopic and microscopic processes should be properly coupled in order to solve such problems effectively and accurately.

The heterogeneous multiscale method (HMM) is one of the most prominent techniques to deal with multiscale problems, which relies on an efficient coupling between the macroscopic and microscopic models \cite{Weinan2003, Abdulle2012, Engquist2005}. For temporal multiscale problems with time scale separation, the macroscale quantities can be computed from the microscale subproblem, and large time steps can be employed to solve the macro-scale model in order to save the computational cost. HMM for temporal multiscale problems only considers local solutions of the microscopic subproblem, thus the initial condition on the local interval needs to be carefully designed and depend on some prior knowledge of the microscale behaviour of the system.

We shall consider in this paper a temporal multiscale problem of the atherosclerosis with a commonly slow plaque growth along the artery boundaries. Frei and Richter \cite{Frei2020} pioneered the study in this direction and presented a basic model of two-way coupled blood and plaque growth in blood arteries, where its numerical simulation is carried out in the Arbitrary Lagrangian-Eulerian (ALE) framework. The ALE may complicate not only governing equations and the analysis of the fluid structure interaction but possibly also the treatment of more complex boundary growth of the plaque. The analysis of the multiscale method is thus done in \cite{Frei2020} for a largely simplified coupled ODE system from the ALE transformed blood-flow-plaque-growth model. In this paper we propose to track the changing domain directly using the front-tracking approach instead of ALE at each time step, thus governing equations are not changed and the method may be more handily applied to general dynamic growth of the plaque. The front-tracking framework not only simplifies the design and analysis of the multiscale method in its original PDE form but also makes it easy to use existing PDE solving software to implement the developed algorithm.

Furthermore, we adopt a more general plaque growth process containing fractional derivatives where memory effects of the plaque accumulation or evolution may be included. Fractional calculus has been extensively studied in the last two decades, especially in the fields of fluid mechanics \cite{Song2016, Churbanov2016, Li2017} and anomalous diffusion \cite{Li2009, Lin2007, Sandev2019, Zhaoyang2021}. Compared with integer order operator, the fractional order operators have a non-local structure, and are suitable for describing the memory and hereditary properties of many physical processes. For our applications, macrophages in the artery wall take up low density lipoproteins (LDL), which carry cholesterol and triglycerides to the tissues, and are finally transformed into foam cells, which are engorged with lipids \cite{Hahn2009}. In the long term, macrophages and foam cells in the artery wall are influenced by a variety of other cells, and thus the diffusion is most likely to be anomalous \cite{Yang2016, Libby2002}. Therefore, the fractional operator with memory effect may be more suitable than the local integer operator to describe the anomalous diffusion process in the artery wall \cite{Metzler2000}. More realistic plaque growth equations can be found in \cite{Yang2016} and \cite{Yang2017}. 

In this paper, we consider the blood flow problem with the atherosclerosis, where the incompressible Navier-Stokes equation with a time-periodic force is coupled with a fractional plaque growth model. Due to the slow plaque growth a significant long-term computation is necessary to observe the change in the domain and the flow properties. The nonlocal property of the fractional order operator makes such a long-term computation impossible. For an efficient long-term computation, it is necessary to develop a temporal multiscale method. The 2-D fluid structure interaction problem in this work faces the challenge that the computational domain changes with time (plaque grows slowly with time) in a fractional order and that the multiscale error analysis will be significantly more difficult in comparison to a simplified integer order ODE system in \cite{Frei2020}. We shall first simplify the procedure by directly tracking the changing domain at each time step using the front-tracking approach and then formulate an auxilary time-periodic system and an effective time-average equation. Based on the auxilary system, a multiscale method is then developed to deal with the macro (slow) and micro (fast) equations separately and two scale variables interact through the growing boundary so as to reduce the computational cost. A simple finite difference scheme in time and a finite element method with an adaptive mesh near the time-dependent boundary will be used to solve both the original and the multiscale method. We shall also introduce a fast iterative procedure to find the initial value of the time-periodic flow subproblem and analyze its convergence rate. The front-tracking framework, designed discrete schemes and the iterative procedure for solving the temporal periodic problem make it easy to implement the multiscale method with existing finite element software. The multiscale method is then implemented through a combination of COMSOL Multiphysics \cite{Pryor2009, Pepper2017} and MATALB. The numerical framework may be applied to a wide range of problems with a periodic applied force and a slow boundary growth.

\subsection*{Outline}
The rest of this manuscript is organized as follows. In Section \ref{section2}, we describe the mathematical model and make necessary assumptions. In Section \ref{section3}, we derive an time-periodic subproblem of the flow equations to approximate the original problem, and analyze the error of the temporal multiscale system at the continuous level. In Section \ref{section4}, we present a time discretization scheme and implementation details of the temporal multiscale system, and analyze the error of its time-discrete scheme. An iterative method to find the initial value of the time-periodic flow equations is also shown in this Section. In Section \ref{section5}, we demonstrate and validate the accuracy and efficiency of our multiscale method through several numerical examples. The effect of the fractional order parameter on plaque growth is also investigated. Finally, we conclude the paper in Section \ref{section6}.

\subsection*{Notation}
For domain $\varOmega$ and $m\geq0$, we use the standard notation for the Sobolev space $H^{m}(\varOmega)$ and the Banach space $L^{m}(\varOmega)$. We use $(\cdot,\cdot)$ to denote the inner product in $L^2(\varOmega)$. Throughout this paper, the letter $C$ will denote a positive constant, with or without subscript, its value may change in different occasions.

\section{Mathematical model and assumptions}
\label{section2}
\subsection{Model problem}
We consider the model which describes the biochemical processes leading to the growth of plaque in blood arteries, as shown in Figure. \ref{Pic1}. We assume that the plaque growth occurs only at the upper boundary of the blood artery, which is controlled by the concentration variable $u(t)$. The blood flow is modeled as an incompressible Newtonian fluid, which is suitable for the description of large arteries \cite{Quarteroni2004}. 
\begin{figure}[H]
	\centering
	\includegraphics[width=9cm,height=3.2cm]{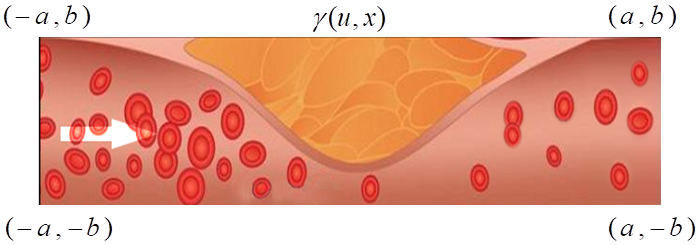}
	\caption{Schematic diagram of atherosclerotic with a plaque growth.}
	\label{Pic1}
\end{figure}

The two-way coupled model of blood flow and plaque growth with fractional derivatives is given as follows
\begin{equation}\label{the1}
	\begin{split}
		&\text{div} \ \textbf{v}=0, \quad \rho(\frac{\partial\textbf{v}}{\partial t}+(\textbf{v}\cdot\nabla)\textbf{v})=\text{div} \sigma(\textbf{v},p)+\textbf{f}, \quad in \ \varOmega(u(t)),\\
		&D_{0^+}^{\alpha}u(t)=\varepsilon R(\textbf{v},u).\\
		&\textbf{v}(0)=\textbf{v}_0, \ u(0)=u_0, \ \varOmega(u(t))=\left\{(x,y):|x|<a, -b<y<b-\gamma(u,x)\right\}.
	\end{split}
\end{equation}

For simplicity of analysis we consider Dirichlet boundary condition, though the method developed here may be applied to other common boundary conditions. In (\ref{the1}), the velocity $\textbf{v}$ and concentration $u$ represent the micro (fast) variable and the macro (slow) variable, respectively. $\rho$ is the density of blood. A periodic force $\textbf{f}(t)=\textbf{f}(t+1)$ is applied to the flow due to the periodic nature of heart pulse. The Cauchy stress tensor $\sigma$ is defined as
\begin{equation}\label{the2}
	\begin{split}
		\sigma=-pI+\rho\nu(\nabla\textbf{v}+\nabla\textbf{v}^T),
	\end{split}
\end{equation}
where $\nu$ is the kinematic viscosity. The function $\gamma(u,x)$ characterizes the shape change of $\Omega$ with respect to time through the concentration $u(t)$.

The reaction term $R\geq 0$ describes the influence of wall shear stress on the boundary growth, which can be simplified as follows \cite{Frei2016, Frei2020}
\begin{equation}\label{the3}
	\begin{split}
		&R=(1+u)^{-1}(1+|\sigma_{WSS}(\textbf{v})|^2)^{-1},\ \sigma_{WSS}(\textbf{v})=\sigma_0^{-1}\int_{\varGamma}\rho\nu(I-\vec{n}\vec{n}^{T})(\nabla\textbf{v}+\nabla\textbf{v}^T)\vec{n}ds,
	\end{split}
\end{equation}
where $\vec{n}$ denotes the outward facing unit normal vector at the deformation boundary $\partial\varOmega$. $\varepsilon\ll 1$ is a small parameter that controls the change of $u$. $D_{0^+}^{\alpha}$ is the Caputo fractional derivative of order $0<\alpha<1$ denoted by \cite{Podlubny1998}
\begin{equation}\label{the4}
	\begin{split}
		D_{0^+}^{\alpha}u(t)=\frac{\partial^\alpha}{\partial t^\alpha}u(t)=\frac{1}{\Gamma(1-\alpha)}\int_0^{t}\frac{u'(s)}{(t-s)^{\alpha}}ds.
	\end{split}
\end{equation}

The Riemann--Liouville (R-L) fractional integral for $\alpha\in (0,1)$ on finite interval $[0,T]$ is defined as
\begin{equation}\label{the5}
	\begin{split}
		I^{\alpha}_{0^+}u(t)=\frac{1}{\Gamma(\alpha)}\int_0^{t}\frac{u(s)}{(t-s)^{1-\alpha}}ds.
	\end{split}
\end{equation}
From the definition of the R-L integral and the Caputo derivative \cite{Podlubny1998}, for $\alpha\in(0,1)$, we can obtain 
\begin{equation}\label{the210}
	D_{0^+}^{\alpha}I^{\alpha}_{0^+}u(t)=u(t), \quad 	I^{\alpha}_{0^+}D_{0^+}^{\alpha}u(t)=u(t)-u(0).
\end{equation}
\begin{equation}
	\label{the260}
	D^{\alpha}_{0^+}D_{0^+}^{\beta}u(t)=D_{0^+}^{\alpha+\beta}u(t), \  I^{\alpha}_{0^+}I_{0^+}^{\beta}u(t)=I_{0^+}^{\alpha+\beta}u(t) \quad (0<\beta<1, 0<\alpha+\beta\leq1).
\end{equation}

\subsection{Assumptions}
Next, we present the essential assumptions which ensure the existence of solutions. 

\begin{assumption}
	\label{assumption1}
	Let $u\in C^1[0,T]$. We assume that the incompressible Navier-Stokes equations on the moving domain $\varOmega(u(t))$ have a solution $\textbf{v}(t)\in H^{2}(\varOmega(t))$ and $p(t)\in H^{1}(\varOmega(t))$. The reaction term is bounded
	\begin{equation}\label{the6}
		\begin{split}
			0\leq R(\textbf{v},u)\leq C_{A1a},
		\end{split}
	\end{equation}
	and has the following Lipschitz condition with respect to slow and fast variables
	\begin{equation}\label{the7}
		\begin{split}
			|R(\textbf{v}_1,u_1)-R(\textbf{v}_2,u_2)|\leq C_{A1b}\left(\Vert \textbf{v}_1-\textbf{v}_2 \Vert_{H^{2}(\varOmega)}+|u_1-u_2|\right).
		\end{split}
	\end{equation}
\end{assumption}

\begin{remark}
	The reaction term $R$ given in (\ref{the3}) satisfies the above assumptions, and its proof can be seen in \cite{Frei2020}.
\end{remark}

\begin{remark}
	We would like to point out that $u(t)$ is bounded based on the properties of fractional operators. Applying the operator $I^{\alpha}_{0^+}$ on both sides of the third equation of (\ref{the1}), we obtain
	\begin{equation}\label{the8}
		\begin{split}
			&|u(t)|=\left|u_0+\frac{\varepsilon}{\Gamma(\alpha)}\int_{0}^{t}(t-s)^{\alpha-1}Rds\right|\leq u_0+\frac{C_{A1a}T^{\alpha}\varepsilon}{\Gamma(\alpha+1)}.
		\end{split}
	\end{equation}
	So to see significant (or $O(1)$) boundary growth we need to compute up to $T=O(\varepsilon^{-\frac{1}{\alpha}})$.
\end{remark}

\begin{assumption} 
	\label{assumption2}
	Let $u\in C^1[0,T]$ be fixed. We assume that the following incompressible Navier-Stokes equation have a unique periodic solution $(\textbf{v}_u,p_u)$
	\begin{equation}\label{the9}
		\begin{split}
			&div \ \textbf{v}_u=0, \ \rho(\frac{\partial\textbf{v}_u}{\partial t}+(\textbf{v}_u\cdot\nabla)\textbf{v}_u)=\text{div} \ \sigma(\textbf{v}_u,p_u)+\textbf{f} \quad in \ [0,1]\times\Omega(u)\\
			&\textbf{v}_u=\textbf{v}_u|_{\varGamma} \ on \  [0,1]\times\partial\Omega(u)\\
			&\textbf{v}_{u}(0)=\textbf{v}_{u}(1) \ in \ \Omega(u).
		\end{split}
	\end{equation}	
	and the solutions are uniformly bounded
	\begin{equation}\label{the10}
		\begin{split}
			\Vert \textbf{v}_u(t)\Vert_{H^{2}(\varOmega)}+\Vert p_u(t)\Vert_{H^{1}(\varOmega)}\leq C_{A2}.
		\end{split}
	\end{equation}
\end{assumption}

\begin{remark}
	For a fixed flow domain, the uniqueness of the periodic solution is guaranteed for moderate Reynolds numbers \cite{Frei2020, Galdi2018}. The periodic Navier-Stokes system can serve as an auxiliary problem, which allows us to quickly solve the temporal multiscale problems.
\end{remark}

We have the following assumption for the time changing shape function $\gamma(u,x)$.
\begin{assumption}
	\label{assumption3} 
	Plaque growth is due to the increased concentration of foam cells \cite{silva2020modeling}, and the plaque growth process is irreversible. We present the following assumptions
	\begin{equation}\label{the201}
		\begin{split}
			\frac{\partial \gamma}{\partial u}\geq 0, \ and \  \frac{\partial \gamma}{\partial t}\geq 0.
		\end{split}
	\end{equation}
\end{assumption}

We remark that in the integer order case $\frac{\partial \gamma}{\partial t}\geq 0$ can be derived from (\ref{the6}) and $\frac{\partial \gamma}{\partial u}$ covers the case considered in \cite{Frei2020}.

\section{Derivation and analysis of the fractional multiscale problem}
\label{section3}

In this section, we derive effective time-average equations for the temporal multiscale system with fractional plaque growth \eqref{the1} based on the assumptions in the previous section. We note that the error analysis between the effective equation and the original equation (\ref{the1}) is performed for the Stokes problem. This is substantially different from the highly simplified system of ODEs \cite{Frei2020}, and can be extended to the full Navier-Stokes system in \eqref{the1}.

\subsection{Derivation of the effective equation}
According to the properties of fractional derivatives, we introduce a new variable
\begin{equation}\label{the11}
	\begin{split}
		\overline{U}(t)=u_0+I^{\alpha}_{0^+}\int_{t}^{t+1}\frac{d^\alpha u(s)}{ds^\alpha}ds,
	\end{split}
\end{equation}
where $I^{\alpha}_{0^+}$ is the R-L fractional integral operator. By inserting $R(\textbf{v}(s),\overline{U}(t))$ in (\ref{the11}), we have
\begin{equation}\label{the12}
	\begin{split}
		&\frac{d^{\alpha}\overline{U}(t)}{dt^{\alpha}}=\int_{t}^{t+1}\frac{d^\alpha u(s)}{ds^\alpha}ds=\varepsilon\int_{t}^{t+1}R(\textbf{v}(s),u(s))ds\\
		&=\varepsilon\int_{t}^{t+1}R(\textbf{v}(s),\overline{U}(t))ds+\varepsilon\int_{t}^{t+1}\left(R(\textbf{v}(s),u(s))-R(\textbf{v}(s),\overline{U}(t))\right)ds. 
	\end{split}
\end{equation}

\begin{lemma}
	\label{lemma10}
	Let $u\in C^1[0,T]$. Then
	\begin{equation}\label{the211}
		\begin{split}
			I^{\alpha}_{0^+}\left(\int_{t}^{t+1}D^{\alpha}_{0^+}u(s)ds\right)=&\int_{t}^{t+1}u(s)ds-\int_{0}^{1}u(s)ds\\
			&+\frac{1}{\Gamma(2-\alpha)}I^{\alpha}_{0^+}\left(\int_{0}^{1}\left[(t+1-r)^{1-\alpha}-t^{1-\alpha}\right]u'(r)dr\right).
		\end{split}
	\end{equation}
\end{lemma}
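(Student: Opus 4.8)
The plan is to strip the left-hand side down to the two expected main terms $\int_t^{t+1}u(s)\,ds-\int_0^1 u(s)\,ds$ plus an explicit remainder, and then to identify that remainder with the stated correction via a pointwise-in-$r$ Beta-function identity. Since $u\in C^1[0,T]$, the functions $u'$ and $D^\alpha_{0^+}u$ are bounded and all kernels below are weakly singular, so every use of Fubini's theorem is justified; I shall also repeatedly use the elementary identity $I^\alpha_{0^+}\!\big(\int_0^{\,\cdot}g(\rho)\,d\rho\big)=\int_0^{\,\cdot}I^\alpha_{0^+}g(\rho)\,d\rho$ together with the inversion $I^\alpha_{0^+}D^\alpha_{0^+}w=w-w(0)$ from \eqref{the210}.

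\emph{Decomposition.} I would write $\int_t^{t+1}=\int_0^{t+1}-\int_0^t$ and substitute $s=\rho+1$ in $\int_1^{t+1}D^\alpha_{0^+}u(s)\,ds$, which gives
\begin{equation*}
  \int_t^{t+1}D^\alpha_{0^+}u(s)\,ds=\int_0^1 D^\alpha_{0^+}u(s)\,ds+\int_0^t (D^\alpha_{0^+}u)(\rho+1)\,d\rho-\int_0^t D^\alpha_{0^+}u(\rho)\,d\rho .
\end{equation*}
Applying $I^\alpha_{0^+}$, the first summand becomes $\big(\int_0^1 D^\alpha_{0^+}u\,ds\big)\,t^\alpha/\Gamma(\alpha+1)$ and the third becomes $-\int_0^t u(s)\,ds+u(0)t$. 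For the middle summand I would split the Caputo derivative using \eqref{the4}, $(D^\alpha_{0^+}u)(\sigma+1)=\frac{1}{\Gamma(1-\alpha)}\int_0^1(\sigma+1-r)^{-\alpha}u'(r)\,dr+\frac{1}{\Gamma(1-\alpha)}\int_0^\sigma(\sigma-\rho)^{-\alpha}u'(\rho+1)\,d\rho$, and recognise the second piece as $D^\alpha_{0^+}[u(\cdot+1)](\sigma)$; hence $I^\alpha_{0^+}$ of the middle summand equals $\int_1^{t+1}u(s)\,ds-u(1)t+\frac{1}{\Gamma(1-\alpha)\Gamma(\alpha+1)}\int_0^t(t-\sigma)^\alpha\!\int_0^1(\sigma+1-r)^{-\alpha}u'(r)\,dr\,d\sigma$. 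Combining and using $\int_1^{t+1}u-\int_0^t u=\int_t^{t+1}u-\int_0^1 u$, the left-hand side becomes $\int_t^{t+1}u(s)\,ds-\int_0^1 u(s)\,ds+\Phi(t)$, where $\Phi(t)$ gathers the leftover pieces $\big(\int_0^1 D^\alpha_{0^+}u\,ds\big)\,t^\alpha/\Gamma(\alpha+1)$, the above double integral, and $-(u(1)-u(0))t$.

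\emph{Matching the correction.} It then remains to show $\Phi(t)=\frac{1}{\Gamma(2-\alpha)}I^\alpha_{0^+}\big(\int_0^1[(t+1-r)^{1-\alpha}-t^{1-\alpha}]u'(r)\,dr\big)$. I would pull $u'(r)$ out of both sides by Fubini, use $\int_0^1 D^\alpha_{0^+}u\,ds=\frac{1}{\Gamma(2-\alpha)}\int_0^1(1-r)^{1-\alpha}u'(r)\,dr$ (Fubini on \eqref{the4} again), and note that the $-t^{1-\alpha}$ piece on the right equals $-(u(1)-u(0))t$ since $I^\alpha_{0^+}[\tau^{1-\alpha}](t)=t\,\Gamma(2-\alpha)$, cancelling the last term of $\Phi$. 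What survives is the pointwise identity, for $c:=1-r\in[0,1]$,
\begin{equation*}
  \frac{t^\alpha c^{1-\alpha}}{\Gamma(\alpha+1)\Gamma(2-\alpha)}+\frac{1}{\Gamma(1-\alpha)\Gamma(\alpha+1)}\int_0^t (t-\sigma)^\alpha(\sigma+c)^{-\alpha}\,d\sigma=\frac{1}{\Gamma(2-\alpha)\Gamma(\alpha)}\int_0^t (t-\sigma)^{\alpha-1}(\sigma+c)^{1-\alpha}\,d\sigma ,
\end{equation*}
which I would obtain by integrating the right-hand integral by parts in $\sigma$ with the antiderivative $-\frac1\alpha(t-\sigma)^\alpha$ of $(t-\sigma)^{\alpha-1}$: the boundary term gives $\frac1\alpha t^\alpha c^{1-\alpha}$, the remaining integral gives $\frac{1-\alpha}{\alpha}\int_0^t(t-\sigma)^\alpha(\sigma+c)^{-\alpha}\,d\sigma$, and the constants reconcile via $\alpha\Gamma(\alpha)=\Gamma(\alpha+1)$ and $(1-\alpha)\Gamma(1-\alpha)=\Gamma(2-\alpha)$.

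\emph{Main obstacle.} The delicate step is the treatment of the shifted Caputo derivative $(D^\alpha_{0^+}u)(\cdot+1)$: one must split off its genuine part $D^\alpha_{0^+}[u(\cdot+1)]$ from the weakly singular ``pre-history'' remainder carried over from $[0,1]$, and then see that the integration-by-parts identity above exactly collapses the kernel $(t-\sigma)^{\alpha-1}(\sigma+c)^{1-\alpha}$ onto the two terms on the left. Everything else is bookkeeping with \eqref{the210}, Fubini, and elementary Beta/Gamma manipulations, and the argument holds for all $t\in[0,T-1]$ without any case distinction.
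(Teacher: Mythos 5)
Your argument is correct: every step checks out, including the term-by-term application of $I^{\alpha}_{0^+}$, the identification $I^{\alpha}_{0^+}[\tau^{1-\alpha}](t)=\Gamma(2-\alpha)\,t$, and the closing pointwise identity, whose integration by parts (boundary term $\tfrac{1}{\alpha}t^{\alpha}c^{1-\alpha}$, constants reconciled via $\alpha\Gamma(\alpha)=\Gamma(\alpha+1)$ and $(1-\alpha)\Gamma(1-\alpha)=\Gamma(2-\alpha)$) I have verified, including at the endpoint $c=0$. The overall strategy is the same as the paper's --- split $\int_t^{t+1}D^{\alpha}_{0^+}u$ into a genuine shifted Caputo derivative $D^{\alpha}_{0^+}[u(\cdot+1)]$ plus a weakly singular pre-history remainder supported on $[0,1]$ --- but you close the argument differently. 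The paper keeps everything at the level of the un-integrated identity: via Fubini it rewrites $\int_0^{t+1}D^{\alpha}_{0^+}u(s)\,ds$ as $\tfrac{1}{\Gamma(2-\alpha)}\int_0^{t+1}(t+1-r)^{1-\alpha}u'(r)\,dr$, recognizes the combination $I^{1-\alpha}_{0^+}[u(\cdot+1)]-I^{1-\alpha}_{0^+}[u]$ as exactly $D^{\alpha}_{0^+}\bigl(\int_t^{t+1}u(s)\,ds\bigr)$, reads off the correction term directly as $\tfrac{1}{\Gamma(2-\alpha)}\int_0^1[(t+1-r)^{1-\alpha}-t^{1-\alpha}]u'(r)\,dr$, and only then applies $I^{\alpha}_{0^+}$ once, using $I^{\alpha}_{0^+}D^{\alpha}_{0^+}w=w-w(0)$ on the composite $w(t)=\int_t^{t+1}u(s)\,ds$. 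This avoids your Beta-function step entirely. Your version applies $I^{\alpha}_{0^+}$ early and therefore must prove that $I^{1+\alpha}_{0^+}[(\cdot+c)^{-\alpha}]$ and $I^{\alpha}_{0^+}[(\cdot+c)^{1-\alpha}]$ combine correctly --- more computation, but it has the minor advantage of making the correction term's structure completely explicit as a superposition over $r$ of elementary kernel identities, and it never needs to recognize the composite Caputo derivative $D^{\alpha}_{0^+}\bigl(\int_t^{t+1}u\bigr)$. Both proofs are valid; the paper's is shorter.
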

\begin{proof}
By direct calculation, we have
\begin{equation}\label{the212}
\begin{split}
		&\int_{t}^{t+1}D^{\alpha}_{0^+}u(s)ds
		=\int_{0}^{t+1}D^{\alpha}_{0^+}u(s)ds-\int_{0}^{t}D^{\alpha}_{0^+}u(s)ds\\
		&=\int_{0}^{t+1}D^{\alpha}_{0^+}u(s)ds-I_{0^+}^{1}D_{0^+}^{\alpha}u(t)\\
		&=\frac{1}{\Gamma(1-\alpha)}\int_{0}^{t+1}u'(r)dr\int_{r}^{t+1}(s-r)^{-\alpha}ds+\frac{1}{\Gamma(2-\alpha)}t^{1-\alpha}u_0-I_{0^+}^{1-\alpha}u(t)\\
		&=\frac{1}{\Gamma(2-\alpha)}\int_{0}^{t+1}(t+1-r)^{1-\alpha}u'(r)dr+\frac{1}{\Gamma(2-\alpha)}t^{1-\alpha}u_0-I_{0^+}^{1-\alpha}u(t)\\
		&=\frac{1}{\Gamma(2-\alpha)}\int_{1}^{t+1}(t+1-r)^{1-\alpha}u'(r)dr+\frac{1}{\Gamma(2-\alpha)}\int_{0}^{1}(t+1-r)^{1-\alpha}u'(r)dr\\
		&+\frac{1}{\Gamma(2-\alpha)}t^{1-\alpha}u_0-I_{0^+}^{1-\alpha}u(t)\\
		&\overset{r=s+1}{=}\frac{1}{\Gamma(1-\alpha)}\int_0^t (t-s)^{-\alpha}u(s+1)ds-I_{0^+}^{1-\alpha}u(t)-\frac{1}{\Gamma(2-\alpha)}t^{1-\alpha}u(1)\\
		&+\frac{1}{\Gamma(2-\alpha)}\int_{0}^{1}(t+1-r)^{1-\alpha}u'(r)dr+\frac{1}{\Gamma(2-\alpha)}t^{1-\alpha}u_0\\
		&=D^{\alpha}_{0^+}\int_{t}^{t+1}u(s)ds+\frac{1}{\Gamma(2-\alpha)}\int_{0}^{1}\left[(t+1-r)^{1-\alpha}-t^{1-\alpha}\right]u'(r)dr.
\end{split}
\end{equation}
	
Applying the operator $I^{\alpha}_{0^+}$ on both sides of the equation (\ref{the212}), we have
\begin{equation}\label{the213}
\begin{split}
&I^{\alpha}_{0^+}\int_{t}^{t+1}D^{\alpha}_{0^+}u(s)ds=\int_{t}^{t+1}u(s)ds-\int_{0}^{1}u(s)ds\\
&+\frac{1}{\Gamma(2-\alpha)}I^{\alpha}_{0^+}\left(\int_{0}^{1}\left[(t+1-r)^{1-\alpha}-t^{1-\alpha}\right]u'(r)dr\right),
\end{split}
\end{equation}
which completes the proof.
\end{proof}

\begin{lemma}
	\label{lemma11}
	Let $u\in C^1[0,T=O(\varepsilon^{-\frac{1}{\alpha}})]$. It holds
	\begin{equation}\label{the203}
		u'(t)\geq 0, \ and \ \int_{t}^{t+1}u'(s)=O(\varepsilon).
	\end{equation}
	For $\forall \lambda>0$, by taking $\xi=\min\{1,\frac{\lambda e}{\alpha}\}$, we have
	\begin{equation}\label{the204}
		\int_{0}^t e^{-\lambda(t-s)}u'(s)ds\leq \int_{0}^t (t-s)^{-\alpha\xi}u'(s)ds=O(\varepsilon^{\xi}).
	\end{equation}
\end{lemma}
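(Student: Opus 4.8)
The plan is to reduce all three assertions to the integral representation $u(t)=u_0+\varepsilon I^{\alpha}_{0^+}R(t)$, where $R(s):=R(\textbf{v}(s),u(s))$, obtained by applying the second identity of \eqref{the210} to the third equation of \eqref{the1}, together with the bounds $0\le R\le C_{A1a}$ from \eqref{the6}. The sign $u'(t)\ge 0$ I would read off from Assumption \ref{assumption3}: since $\gamma$ depends on $t$ only through $u$, one has $\frac{\partial\gamma}{\partial t}=\frac{\partial\gamma}{\partial u}\,u'(t)$, so $\frac{\partial\gamma}{\partial u}\ge 0$ together with $\frac{\partial\gamma}{\partial t}\ge 0$ forces $u'(t)\ge 0$ wherever the interface actually moves; this is the analytic form of the irreversibility of plaque growth. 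For the unit--window estimate I would write $\int_t^{t+1}u'(s)\,ds=u(t+1)-u(t)$ and split $\int_0^{t+1}=\int_0^{t}+\int_t^{t+1}$ in the representation: because $\alpha-1<0$ the difference term $\frac{\varepsilon}{\Gamma(\alpha)}\int_0^{t}[(t+1-s)^{\alpha-1}-(t-s)^{\alpha-1}]R(s)\,ds$ is $\le 0$, while $\frac{\varepsilon}{\Gamma(\alpha)}\int_t^{t+1}(t+1-s)^{\alpha-1}R(s)\,ds\le \frac{C_{A1a}}{\Gamma(\alpha+1)}\varepsilon$; combined with $u'\ge 0$ this yields $0\le u(t+1)-u(t)\le \frac{C_{A1a}}{\Gamma(\alpha+1)}\varepsilon=O(\varepsilon)$.

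For the first inequality in \eqref{the204} I would prove the pointwise bound $e^{-\lambda\tau}\le \tau^{-\alpha\xi}$ for every $\tau>0$ and then integrate it against $u'(s)\ge 0$. Taking logarithms, $e^{-\lambda\tau}\le\tau^{-\alpha\xi}$ is equivalent to $\alpha\xi\,\frac{\ln\tau}{\tau}\le\lambda$, which is trivial for $\tau\le 1$ and, for $\tau>1$, follows from $\sup_{\tau>0}\frac{\ln\tau}{\tau}=\frac1e$ (attained at $\tau=e$); hence it suffices that $\alpha\xi/e\le\lambda$, i.e.\ $\xi\le\lambda e/\alpha$, and the cap $\xi\le 1$ is imposed so that $\alpha\xi<1$ in the next step -- which is exactly the choice $\xi=\min\{1,\lambda e/\alpha\}$.

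For the remaining $O(\varepsilon^{\xi})$ bound, the key observation is that, by the definition \eqref{the4}, $\int_0^{t}(t-s)^{-\alpha\xi}u'(s)\,ds=\Gamma(1-\alpha\xi)\,D^{\alpha\xi}_{0^+}u(t)$ with $\alpha\xi\in(0,1)$. Inserting $u=u_0+\varepsilon I^{\alpha}_{0^+}R$, using that the Caputo derivative of a constant vanishes, the semigroup rule $I^{\alpha}_{0^+}=I^{\alpha\xi}_{0^+}I^{\alpha(1-\xi)}_{0^+}$ from \eqref{the260} and the cancellation $D^{\alpha\xi}_{0^+}I^{\alpha\xi}_{0^+}=\mathrm{Id}$ from \eqref{the210}, I obtain $D^{\alpha\xi}_{0^+}u=\varepsilon\,I^{\alpha(1-\xi)}_{0^+}R$ when $\xi<1$ (and simply $D^{\alpha}_{0^+}u=\varepsilon R$ when $\xi=1$). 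Then $0\le D^{\alpha\xi}_{0^+}u(t)\le \frac{\varepsilon C_{A1a}}{\Gamma(\alpha(1-\xi)+1)}\,t^{\alpha(1-\xi)}$, and since $t\le T=O(\varepsilon^{-1/\alpha})$ we get $t^{\alpha(1-\xi)}=O(\varepsilon^{\xi-1})$, so that $\int_0^{t}(t-s)^{-\alpha\xi}u'(s)\,ds=\Gamma(1-\alpha\xi)\,D^{\alpha\xi}_{0^+}u(t)=O(\varepsilon^{\xi})$; the case $\xi=1$ gives directly $\le\Gamma(1-\alpha)\varepsilon C_{A1a}=O(\varepsilon)$.

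The step I expect to be the main obstacle is the sign $u'(t)\ge 0$: in the integer-order model it is immediate since $u'=\varepsilon R\ge 0$, whereas here positivity of the Caputo derivative $D^{\alpha}_{0^+}u=\varepsilon R$ does not by itself imply monotonicity of $u$, so the claim genuinely relies on Assumption \ref{assumption3}. The only other point needing care is that the composition formulas \eqref{the210}--\eqref{the260} are applied with admissible exponents, namely $\alpha\xi,\ \alpha(1-\xi)\in(0,1)$ with $\alpha\xi+\alpha(1-\xi)=\alpha\le 1$, which holds precisely because $0<\alpha<1$ and $0<\xi\le 1$.
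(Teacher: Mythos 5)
Your proposal is correct and follows essentially the same route as the paper: the same appeal to Assumption \ref{assumption3} for $u'\ge 0$, the same integral representation $u=u_0+\varepsilon I^{\alpha}_{0^+}R$ for the unit-window bound, and the same pointwise comparison $e^{-\lambda\tau}\le\tau^{-\alpha\xi}$ with $\xi=\min\{1,\lambda e/\alpha\}$. The only cosmetic difference is in the final estimate, where the paper factors $(t-s)^{-\alpha\xi}=(t-s)^{\alpha(1-\xi)}(t-s)^{-\alpha}$ and pulls out $t^{\alpha(1-\xi)}$ directly, while you reach the identical bound $\varepsilon\,t^{\alpha(1-\xi)}=O(\varepsilon^{\xi})$ via the semigroup identities \eqref{the210}--\eqref{the260}.
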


\begin{proof}
	Differentiating $\gamma(u,x)$ with respect to $t$ and using Assumption \ref{assumption3}, it is easy to see $u'(t)\geq 0$. Applying equation (\ref{the210}), we have
	\begin{equation}
		\begin{split}
			\int_{t}^{t+1}u'(s)ds=\frac{1}{\Gamma(\alpha)}\int_{t}^{t+1}(t-s)^{\alpha-1}O(\varepsilon)ds=O(\varepsilon).
		\end{split}
	\end{equation}
	
	Let $g(z)=z^{-\alpha\xi}-e^{-\lambda z}$. A simple computation shows that $\underset{z\in(0,T)}{min}g(z)=g(\frac{\alpha\xi}{\lambda})\geq 0$. Thus, for $t\in[0,O(\varepsilon^{-\frac{1}{\alpha}})]$, we have
	\begin{equation}\label{the205}
		\begin{split}
			&\int_{0}^t e^{-\lambda(t-s)}u'(s)ds\leq \int_{0}^t (t-s)^{-\alpha\xi}u'(s)ds=\int_{0}^t (t-s)^{\alpha(1-\xi)}(t-s)^{-\alpha}u'(s)ds\\
			&\leq t^{\alpha(1-\xi)}\cdot\int_{0}^t (t-s)^{-\alpha}u'(s)ds=O(\varepsilon^{\xi}).
		\end{split}
	\end{equation}
\end{proof}

\begin{lemma}
	\label{lemma1}
	Let $u\in C^1[0,T]$, it holds that
	\begin{equation}\label{the13}
		\begin{split}
			\left|\varepsilon\int_{t}^{t+1}\left(R(\textbf{v}(s),u(s))-R(\textbf{v}(s),\overline{U}(t))\right)ds\right|\leq C_{L33} \varepsilon^2,
		\end{split}
	\end{equation}
	where $C_{L33}$ depends on $\alpha$, and the constants in Assumptions \ref{assumption1}. 
	\begin{proof}
		By using the Lipschitz condition of the reaction term $R$, \textbf{Lemma \ref{lemma10}} and \textbf{Lemma \ref{lemma11}}, we have
		\begin{equation}\label{the15}
			\begin{split}
				&\left|\varepsilon\int_{t}^{t+1}\left(R(\textbf{v}(s),u(s))-R(\textbf{v}(s),\overline{U}(t))\right)ds\right|\leq\varepsilon C_{A1b}\int_{t}^{t+1}\left|u(s)-\overline{U}(t)\right|ds\\
				&\leq \varepsilon C_{A1b}\int_{t}^{t+1}\left|\int_t^{t+1}\left(u(s)-u(r)\right)dr\right|ds+\varepsilon C_{A1b}\left|\int_0^{1}\left(u(s)-u_0\right)ds\right|\\
				&+\frac{\varepsilon C_{A1b}}{\Gamma(2-\alpha)}I^{\alpha}_{0^+}\left(\int_{0}^{1}\left[(t+1-r)^{1-\alpha}-t^{1-\alpha}\right]u'(r)dr\right)\\
				&\leq \varepsilon C_{A1b}\int_{t}^{t+1}\left|\int_t^{t+1}\int_r^{s}u'(q)dqdr\right|ds+\varepsilon C_{A1b}\int_{0}^{1}\left|\int_{0}^{s}u'(r)dr\right|ds\\
				&+\frac{\varepsilon C_{A1b}}{\Gamma(2-\alpha)}I^{\alpha}_{0^+}\left((t+1)^{1-\alpha}-t^{1-\alpha}\right)\cdot\int_0^1 u'(r)dr\\
				&\leq C\varepsilon^{2}+C_{A1b}(t+1-t)\varepsilon^{2}\leq C_{L33}\varepsilon^{2}.
			\end{split}
		\end{equation}
	\end{proof}
\end{lemma}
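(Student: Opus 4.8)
The plan is to eliminate the fast variable $\textbf{v}$ at the outset and then reduce everything to scalar increment estimates for $u$. First I would apply the Lipschitz bound (\ref{the7}) in the slow argument of $R$, which gives
\begin{equation*}
\left|\varepsilon\int_{t}^{t+1}\!\left(R(\textbf{v}(s),u(s))-R(\textbf{v}(s),\overline{U}(t))\right)ds\right|\le \varepsilon C_{A1b}\int_{t}^{t+1}\left|u(s)-\overline{U}(t)\right|ds ,
\end{equation*}
so that it suffices to prove $\int_{t}^{t+1}|u(s)-\overline{U}(t)|\,ds=O(\varepsilon)$; note that this implicitly uses $u\in C^1[0,T]$ with $T=O(\varepsilon^{-1/\alpha})$, as required to invoke Lemma \ref{lemma11}.

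Next I would substitute the representation of $\overline{U}$ coming from its definition (\ref{the11}) together with Lemma \ref{lemma10}, namely $\overline{U}(t)=u_0+\int_{t}^{t+1}u(r)\,dr-\int_{0}^{1}u(r)\,dr+\frac{1}{\Gamma(2-\alpha)}I^{\alpha}_{0^+}\big(\int_0^1[(t+1-r)^{1-\alpha}-t^{1-\alpha}]u'(r)\,dr\big)$. This makes $u(s)-\overline{U}(t)$ split into three pieces: (i) $u(s)-\int_{t}^{t+1}u(r)\,dr=\int_{t}^{t+1}\!\big(\int_{r}^{s}u'(q)\,dq\big)dr$; (ii) $\int_0^1 u(r)\,dr-u_0=\int_0^1\!\big(\int_0^r u'(q)\,dq\big)dr$; and (iii) the $I^{\alpha}_{0^+}$ correction term. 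For (i) and (ii) I would use $u'\ge 0$ and the increment bounds (\ref{the203}) of Lemma \ref{lemma11} to dominate the inner integrals of $u'$ by $\int_{t}^{t+1}u'(q)\,dq=O(\varepsilon)$ and $\int_0^1 u'(r)\,dr=O(\varepsilon)$ respectively; integrating in $s$ over the unit interval, each of these pieces contributes $O(\varepsilon)$, hence $O(\varepsilon^2)$ after the outer factor $\varepsilon C_{A1b}$.

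The delicate step, and the one I expect to be the real obstacle, is (iii). Using $u'\ge 0$ and $(t+1-r)^{1-\alpha}\le(t+1)^{1-\alpha}$ for $r\in[0,1]$, this term is bounded by $\frac{1}{\Gamma(2-\alpha)}\big(\int_0^1 u'(r)\,dr\big)\,I^{\alpha}_{0^+}\big((t+1)^{1-\alpha}-t^{1-\alpha}\big)$, and since $\int_0^1 u'(r)\,dr=O(\varepsilon)$ the claim reduces to showing $I^{\alpha}_{0^+}\big((t+1)^{1-\alpha}-t^{1-\alpha}\big)=O(1)$ uniformly on $[0,T]$ with $T=O(\varepsilon^{-1/\alpha})$. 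One must be careful here: the crude bound $(t+1)^{1-\alpha}-t^{1-\alpha}\le1$ only gives $I^{\alpha}_{0^+}1=t^{\alpha}/\Gamma(\alpha+1)=O(\varepsilon^{-1})$ on that horizon, which would cost a full power of $\varepsilon$ and leave only an $O(\varepsilon)$ estimate. Instead I would exploit the mean-value decay $(t+1)^{1-\alpha}-t^{1-\alpha}\le(1-\alpha)t^{-\alpha}$, so that $I^{\alpha}_{0^+}\big((t+1)^{1-\alpha}-t^{1-\alpha}\big)\le\frac{1-\alpha}{\Gamma(\alpha)}\int_0^t(t-\tau)^{\alpha-1}\tau^{-\alpha}\,d\tau=\frac{1-\alpha}{\Gamma(\alpha)}\Gamma(\alpha)\Gamma(1-\alpha)=\Gamma(2-\alpha)$, a constant independent of $t$; this Beta-type identity is exactly what rescues the extra power of $\varepsilon$. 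Collecting (i)--(iii) and multiplying by $\varepsilon C_{A1b}$ then yields the stated bound $C_{L33}\varepsilon^2$, with $C_{L33}$ depending only on $\alpha$, $C_{A1b}$, and the implied constant in Lemma \ref{lemma11} (which in turn traces back to $C_{A1a}$).
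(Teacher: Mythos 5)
Your proof is correct and follows essentially the same route as the paper: the same Lipschitz reduction, the same three-way decomposition of $u(s)-\overline{U}(t)$ via Lemma \ref{lemma10}, and the same use of Lemma \ref{lemma11} to bound the increments of $u$ by $O(\varepsilon)$. Your treatment of the third piece is in fact more explicit than the paper's, which compresses the bound $I^{\alpha}_{0^+}\bigl((t+1)^{1-\alpha}-t^{1-\alpha}\bigr)\le\Gamma(2-\alpha)$ into the shorthand factor $(t+1-t)$; your mean-value estimate combined with the Beta-integral identity is exactly the right way to justify that step uniformly for $t\in[0,T]$.
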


We thus have the following estimate for the equation \eqref{the11} of $\overline{U}(t)$,
\begin{equation}\label{the16}
	\begin{split}
		\frac{d^{\alpha}\overline{U}(t)}{dt^{\alpha}}=\varepsilon\int_{t}^{t+1}R(\textbf{v}(s),\overline{U}(t))ds+O(\varepsilon^{2}).
	\end{split}
\end{equation}

The discretization of $\overline{U}(t)$ in a macro-time step $T_n\to T_{n+1}=T_n+\triangle T$ is not accurate because it involves the dynamic evolution of $\textbf{v}(T_n)$ to $\textbf{v}(T_{n+1})$ on the fast scale. The local periodicity can be helpful to effectively capture the velocity feature on the fast scale in $[T_n, T_{n+1}]$. We thus introduce an auxiliary periodic problem at the end of this subsection whose solution $\textbf{v}_{U(t)}$ will be used to approximate the fast scale velocity feature. Here $U(t)$ is an approximation of $\overline{U}(t)$ to be defined at the end of the Section.

Next, we approximate the fractional differential equation (\ref{the16}) by inserting\\
$R(\textbf{v}_{\overline{U}(t)}(s),\overline{U}(t))$ with a fixed $\overline{U}(t)$, that is, writing (\ref{the16}) as
\begin{equation}\label{the17}
	\begin{split}
		&\frac{d^{\alpha}\overline{U}(t)}{dt^{\alpha}}=\varepsilon\int_{t}^{t+1}R(\textbf{v}_{\overline{U}(t)}(s),\overline{U}(t))ds\\
		&+\varepsilon\int_{t}^{t+1} \left(R(\textbf{v}(s),\overline{U}(t))-R(\textbf{v}_{\overline{U}(t)}(s),\overline{U}(t))\right)ds+O(\varepsilon^{2}).
	\end{split}
\end{equation}

By using the Lipschitz condition of $R$, we have 
\begin{equation}\label{the18}
	\begin{split}
		&\varepsilon\int_{t}^{t+1}\left|R(\textbf{v}(s),\overline{U}(t))-R(\textbf{v}_{\overline{U}(t)}(s),\overline{U}(t))\right|ds\\
		&\leq \varepsilon C_{A1b} \int_t^{t+1}\Vert\textbf{v}(s)-\textbf{v}_{\overline{U}(t)}(s)\Vert_{H^2(\varOmega)} ds.
	\end{split}
\end{equation}

In the \textbf{Theorem \ref{theorem4}}, we will prove that
\begin{equation}\label{the19}
	\begin{split}
		\int_t^{t+1}\Vert\textbf{v}(s)-\textbf{v}_{\overline{U}(t)}(s)\Vert_{H^2(\varOmega)} ds=O(\varepsilon^{\frac{1}{2}}),
	\end{split}
\end{equation}
and therefore we have
\begin{equation}\label{the20}
	\begin{split}
		\frac{d^{\alpha}\overline{U}(t)}{dt^{\alpha}}=\varepsilon\int_{t}^{t+1}R(\textbf{v}_{\overline{U}(t)}(s),\overline{U}(t))ds+O(\varepsilon^{\frac{3}{2}}).
	\end{split}
\end{equation}

Here we remark that for integer order case ($\alpha=1$), the estimates can be much better and can be done much easier, that is, the estimate of (\ref{the204}), (\ref{the18}) and (\ref{the19}) is $O(\varepsilon)$ and that of (\ref{the20}) is $O(\varepsilon^2)$. See a relevant Remark \ref{remark3.8} later.

It has been shown that blood flow would be approximately periodic with period 1 after a period of time \cite{Serrin1959note, Kyed2012time}. In other words, even if, for a given initial velocity, the blood flow is not immediately time-periodic, it would become periodic after a certain time. We assume that the blood flow has reached a periodic state initially at time zero since the blood flow has already run for a long time in the body. Therefore, this paper considers the initial value flow problem (\ref{the1}) that has already been time periodic of period 1 from the initial time when the boundary is fixed with initial $u(0)=u_0$, in other word, we may assume $\textbf{v}(0)=\textbf{v}_{u(0)}(0)$ (the definition of $\textbf{v}_u(0)$ is given in (\ref{the9})). This assumption will be used later in the analysis of Lemma \ref{lemma3}.

For the temporal multiscale problem of (\ref{the1}), we give the following auxiliary time periodic flow problem based on (\ref{the20}).
\begin{equation}\label{the21}
	\begin{split}
		&\text{div} \ \textbf{v}_U=0, \quad \rho(\frac{\partial\textbf{v}_U}{\partial t}+(\textbf{v}_U\cdot\nabla)\textbf{v}_U)=\text{div} \  \sigma(\textbf{v}_U,p_U)+\textbf{f}, \quad in \ \Omega(U)\\
		&\textbf{v}_U(0)=\textbf{v}_U(1),\quad \textbf{v}_{U}=\textbf{g} \ \text{on} \ \partial\varOmega(U),\\
		&\frac{d^{\alpha}}{dt^{\alpha}}U(t)=\varepsilon \int_{t}^{t+1}R(\textbf{v}_{U(t)}(s),U(t))ds,\quad U(0)=u_0.
	\end{split}
\end{equation}

From this, we give the framework diagram (Figure. \ref{Multiscale}) of the multiscale algorithm.

\begin{figure}[H]
	\centering
	\includegraphics[width=13cm,height=5.5cm]{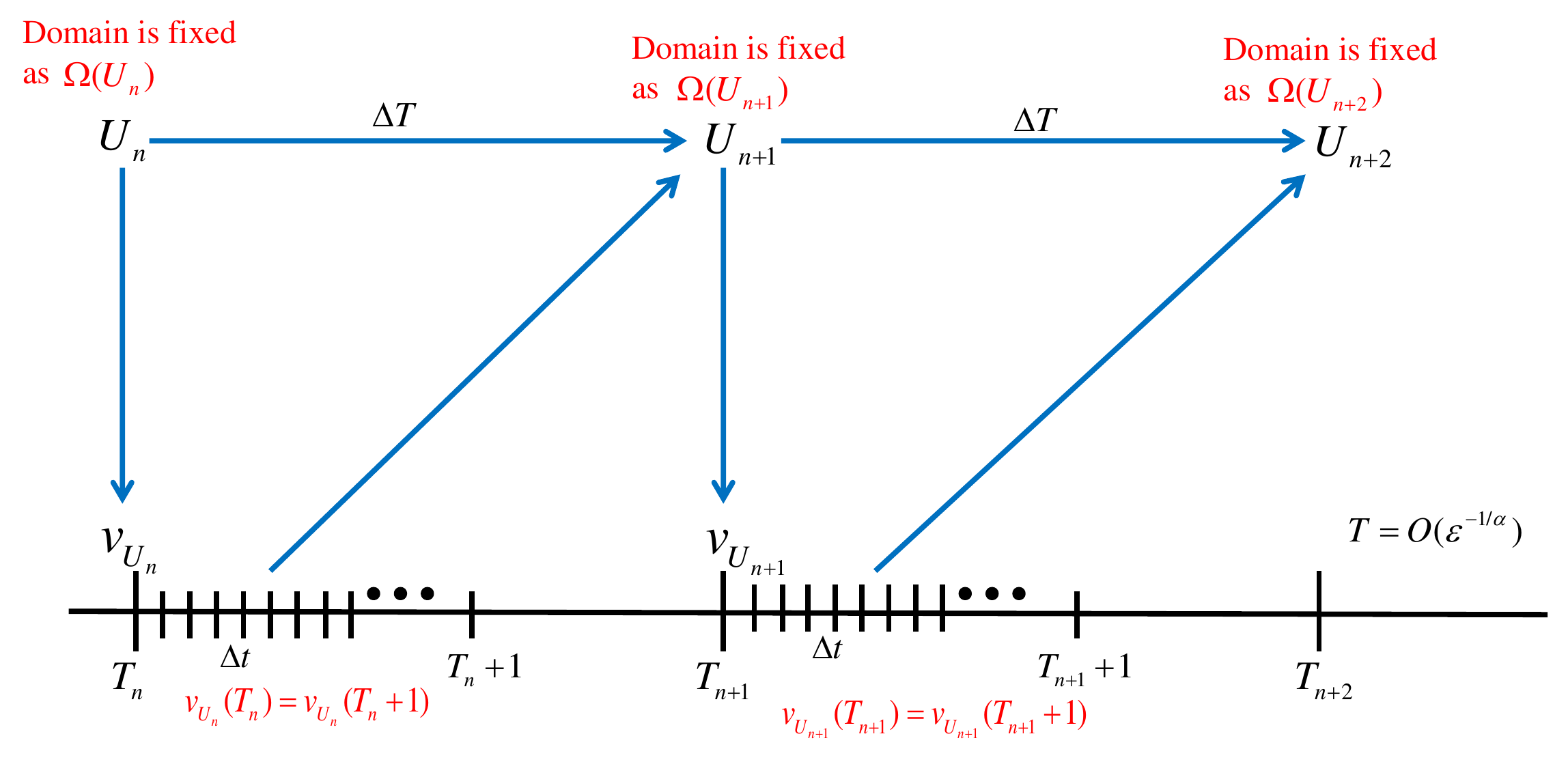}
	\caption{Schematic diagram of the multiscale method. Stepping the macro variable $U(t)$ with macro step size $\Delta T\gg 1$, and then computing the periodic solution $v_{U}(t)$ using the micro step size $\Delta t$ over a unit cycle for a fixed flow domain $\Omega(U)$.}
	\label{Multiscale}
\end{figure}

\subsection{Error analysis of the fractional multiscale problem}
In this section, we will carry out a temporal multiscale error analysis for the Stokes problem. We will show that problem (\ref{the21}) can be used as an effective approximation of problem (\ref{the1}).

\begin{lemma}
	\label{lemma2}
	Let the concentration $u$ be fixed and $0\leq u\leq u_{max}$, $\textbf{v}_u$ is the solution of the corresponding time periodic Stokes problem
	\begin{equation}\label{the22}
		\begin{split}
			&\mathrm{div} \ \textbf{v}_u=0, \quad \rho\frac{\partial\textbf{v}_u}{\partial t}=\rho\nu\Delta \textbf{v}_u-\nabla p_u+\textbf{f}, \quad in \ \varOmega(u)\\
			&\textbf{v}_u(0)=\textbf{v}_u(1), \ \textbf{v}_u|_{\partial\Omega(u)}=0,
		\end{split}
	\end{equation}
	It holds that
	\begin{equation}\label{the23}
		\begin{split}
			\left\Vert \frac{\partial \textbf{v}_u}{\partial u}\right\Vert^2_{L^2(\varOmega(u))}\leq C_{L34a}, \ and \ \int_{0}^{1}\left\Vert \frac{\partial \textbf{v}_u}{\partial u}\right\Vert^2_{H^2(\varOmega(u))}dt\leq C_{L34b}.
		\end{split}
	\end{equation}
	Here $C_{L34a}$ and $C_{L34b}$ depend on $\textbf{f}$, domain $\varOmega$, the constants in Assumptions \ref{assumption2} and \ref{assumption3}.
\end{lemma}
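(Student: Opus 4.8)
The plan is to differentiate the periodic Stokes system \eqref{the22} with respect to the scalar growth parameter $u$ and to recognize that $\textbf{w}:=\partial\textbf{v}_u/\partial u$ solves again a $1$-periodic Stokes problem whose data is controlled by $\textbf{v}_u$ itself (hence by $C_{A2}$ from Assumption \ref{assumption2}) together with the bounds on the shape function $\gamma$ from Assumption \ref{assumption3}; the two estimates in \eqref{the23} then follow from, respectively, the energy identity and the maximal $L^2$-regularity of the periodic Stokes operator.

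First I would fix a smooth family of diffeomorphisms $\Phi_u:\widehat\varOmega\to\varOmega(u)$ from a reference domain $\widehat\varOmega$; since only the upper boundary moves, through $y\mapsto b-\gamma(u,x)$, a simple $x$-dependent vertical stretching does the job, and $\partial_u\Phi_u$ together with its spatial derivatives are bounded in terms of the smooth function $\gamma$ and its derivatives, which are $O(1)$ for $0\le u\le u_{max}$. Pulling \eqref{the22} back to $\widehat\varOmega$ gives a Stokes-type system on a \emph{fixed} domain whose coefficients depend smoothly on $u$ through $\nabla\Phi_u$ and $\det\nabla\Phi_u$; differentiating these reference-domain equations in $u$ and transforming back, one obtains that $\textbf{w}$ is divergence free in $\varOmega(u)$, is $1$-periodic in $t$, satisfies $\rho\,\partial_t\textbf{w}=\rho\nu\Delta\textbf{w}-\nabla q+\textbf{F}$ with a forcing $\textbf{F}$ assembled from $\textbf{v}_u,\nabla\textbf{v}_u,\nabla^2\textbf{v}_u,p_u,\nabla p_u$ and the bounded derivatives of $\Phi_u$, and carries the inhomogeneous boundary datum produced by the shape derivative of the no-slip condition, $\textbf{w}|_{\partial\varOmega(u)}=-(\partial_u\Phi_u\cdot\vec{n})\,\partial_{\vec{n}}\textbf{v}_u=:\textbf{h}$. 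Because $\textbf{v}_u$ and all of its tangential derivatives vanish on $\partial\varOmega(u)$ while $\text{div}\,\textbf{v}_u=0$, one has $\vec{n}\cdot\partial_{\vec{n}}\textbf{v}_u=0$ on the boundary, hence $\textbf{h}\cdot\vec{n}=0$, so $\textbf{h}$ admits a solenoidal extension $E\textbf{h}$ into $\varOmega(u)$ with $\|E\textbf{h}\|_{H^1(\varOmega(u))}\le C\|\textbf{h}\|_{H^{1/2}(\partial\varOmega(u))}\le C\|\textbf{v}_u\|_{H^2(\varOmega(u))}$.

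Next I would subtract the lifting and carry out the standard energy estimate for $\widetilde{\textbf{w}}:=\textbf{w}-E\textbf{h}$: testing the momentum equation with $\widetilde{\textbf{w}}$, integrating over $\varOmega(u)$ and over one period $[0,1]$, the time-derivative term drops by periodicity and the pressure term drops since $\widetilde{\textbf{w}}$ is divergence free with zero trace; using $\int_0^1\big(\|\textbf{v}_u\|_{H^2(\varOmega)}^2+\|p_u\|_{H^1(\varOmega)}^2\big)dt\le C_{A2}^2$, the $\gamma$-bounds, Poincar\'e's inequality and absorption then give $\sup_{t\in[0,1]}\|\widetilde{\textbf{w}}(t)\|_{L^2(\varOmega(u))}^2+\int_0^1\|\nabla\widetilde{\textbf{w}}\|_{L^2(\varOmega(u))}^2\,dt\le C$, and adding back $E\textbf{h}$ yields the first bound in \eqref{the23} with $C_{L34a}$ depending only on $\textbf{f}$, $\varOmega$, $u_{max}$ and the constants in Assumptions \ref{assumption2}--\ref{assumption3}. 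For the second bound I would invoke maximal $L^2$-in-time regularity for the time-periodic Stokes operator on the fixed smooth domain $\varOmega(u)$ (available at the moderate Reynolds numbers covered by Assumption \ref{assumption2}): since $\textbf{F}$ and $\partial_t E\textbf{h}$, $\Delta E\textbf{h}$ lie in $L^2([0,1];L^2(\varOmega(u)))$ with norm controlled as above, the periodic solution obeys $\int_0^1\|\textbf{w}\|_{H^2(\varOmega(u))}^2\,dt\le C_{L34b}$, again with a constant independent of $u$.

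I expect the main obstacle to be the rigorous treatment of the shape derivative: proving that $u\mapsto\textbf{v}_u$ is differentiable with values in $H^2$, pinning down the boundary datum $\textbf{h}$, and, for the $H^2$ estimate, obtaining enough \emph{temporal} regularity of the lifting $E\textbf{h}$, since Assumption \ref{assumption2} only supplies a spatial $H^2$ bound on $\textbf{v}_u$. The clean way around the last point is to differentiate in $u$ \emph{on the reference domain}, where the boundary is fixed and the $u$-dependence sits entirely in the smooth coefficients, so that no time lifting is needed, and only then transform the resulting periodic Stokes estimate back to $\varOmega(u)$.
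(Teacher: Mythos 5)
Your proposal is correct and follows essentially the same route as the paper: pull the periodic Stokes system back to a fixed reference domain via the explicit vertical stretching, differentiate in $u$, and obtain the $L^2$ bound from the periodic energy estimate (Gronwall with $e^{C t}$ plus periodicity) and the $\int_0^1 H^2$ bound by testing against the projected Stokes operator, i.e.\ maximal $L^2$-regularity. The only cosmetic difference is that where you lift the inhomogeneous boundary datum $\textbf{h}$ by a solenoidal extension on $\varOmega(u)$ (and then, rightly worried about the temporal regularity of that lifting, retreat to the reference domain), the paper stays on $\varOmega(0)$ throughout and absorbs the divergence defect by the interior correction $\textbf{z}_u=\textbf{w}_u+J^{-1}\textbf{F}\,\partial_u(J\textbf{F}^{-1})\,\textbf{v}_u$, which serves exactly the purpose of your lifting without needing any boundary trace or time regularity beyond Assumption~\ref{assumption2}.
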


\begin{proof}
	Based on the Stokes equation and Assumption \ref{assumption2}, we have
	\begin{equation}\label{the25}
		\begin{split}
			\left\Vert \frac{\partial \textbf{v}_{u}}{\partial t}\right\Vert_{L^2\left({\varOmega(u)}\right)}\leq C.
		\end{split}
	\end{equation}
	
	For domain $\varOmega(u)$, we can derive from a fixed reference domain $\varOmega(0)$ using the ALE method \cite{Richter2017fluid}
	\begin{equation}\label{the26}
		\begin{split}
			&T: \varOmega(0)\rightarrow\varOmega(u), \quad T=
			\begin{pmatrix}
				x\\
				\frac{b-\gamma(u,x)}{b}y
			\end{pmatrix}
		\end{split}
	\end{equation}
	with Jacobian matrix and determinant given by
	\begin{equation}\label{the27}
		\textbf{F}:=\nabla T=
		\begin{pmatrix}
			1 & 0\\
			\frac{\partial \gamma(u,x)}{\partial x}\frac{b}{y} & \frac{b-\gamma(u,x)}{b}
		\end{pmatrix},\quad
		J:=\det(\textbf{F})=\frac{b-\gamma(u,x)}{b}.
	\end{equation}
	
	The Stokes equations can be mapped to a fixed reference domain
	\begin{equation}\label{the28}
		\begin{split}
			&\nabla \cdot\left(J\textbf{F}^{-1}\textbf{v}_{u}\right)=0, \quad J\frac{\partial \textbf{v}_{u}}{\partial t}=\nabla \cdot \left(J\nabla \textbf{v}_{u}\textbf{F}^{-1}\textbf{F}^{-T}\right)-J\textbf{F}^{-T}\nabla p_{u}+J\textbf{f}, \quad in \ \varOmega(0).
		\end{split}
	\end{equation}
	
	Two different representations of the Stokes (Navier-Stokes) equation in the ALE and in the Eulerian coordinates are equivalent (see \cite{Richter2017fluid}, Chapter 2), it follows that
	\begin{equation}\label{the29}
		\begin{split}
			&C_{*}\left\{\left\Vert \textbf{v}_{u}\right\Vert_{H^2\left({\varOmega(u)}\right)}+\left\Vert p_{u}\right\Vert_{H^1\left({\varOmega(u)}\right)}+\left\Vert \frac{\partial \textbf{v}_{u}}{\partial t}\right\Vert_{L^2\left({\varOmega(u)}\right)}\right\}\\
			&\leq\left\Vert \textbf{v}_{u}\right\Vert_{H^2\left({\varOmega(0)}\right)}+\left\Vert p_{u}\right\Vert_{H^1\left({\varOmega(0)}\right)}+\left\Vert \frac{\partial \textbf{v}_{u}}{\partial t}\right\Vert_{L^2\left({\varOmega(0)}\right)}\\
			&\leq C_{**}\left\{\left\Vert \textbf{v}_{u}\right\Vert_{H^2\left({\varOmega(u)}\right)}+\left\Vert p_{u}\right\Vert_{H^1\left({\varOmega(u)}\right)}+\left\Vert \frac{\partial \textbf{v}_{u}}{\partial t}\right\Vert_{L^2\left({\varOmega(u)}\right)}\right\}
		\end{split}
	\end{equation}
	
	Differentiating the equation (\ref{the28}) with respect to $u$. Let $\textbf{w}_u=\frac{\partial \textbf{v}_u}{\partial u}$, we obtain
	\begin{equation}\label{the30}
		\begin{split}
			&\nabla \cdot\left(J\textbf{F}^{-1}\textbf{w}_{u}\right)+\nabla \cdot\left(\frac{\partial (J\textbf{F}^{-1})}{\partial u}\textbf{v}_{u}\right)=0\\
			& \frac{\partial J}{\partial u}\frac{\partial \textbf{v}_{u}}{\partial t}+J\frac{\partial \textbf{w}_{u}}{\partial t}=\nabla \cdot \left(J\nabla \textbf{w}_{u}\textbf{F}^{-1}\textbf{F}^{-T}\right)+\nabla \cdot \left(\nabla \textbf{v}_{u}\frac{\partial \left(J\textbf{F}^{-1}\textbf{F}^{-T}\right)}{\partial u}\right)\\
			&-\frac{\partial (J\textbf{F}^{-T})}{\partial u}\nabla p_{u}-J\textbf{F}^{-T}\nabla \left(\frac{\partial p_{u}}{\partial u}\right)+\frac{\partial J}{\partial u}\textbf{f}, \quad in \ \varOmega(0).
		\end{split}
	\end{equation}
	
	Let $\textbf{z}_u=\textbf{w}_u+J^{-1}\textbf{F}\frac{\partial (J\textbf{F}^{-1})}{\partial u}\textbf{v}_{u}$, (\ref{the30}) can be rewritten as
	\begin{equation}\label{the31}
		\begin{split}
			&\nabla \cdot\left(J\textbf{F}^{-1}\textbf{z}_{u}\right)=0\\
			& \frac{\partial J}{\partial u}\frac{\partial \textbf{v}_{u}}{\partial t}
			-\textbf{F}\frac{\partial \left(J\textbf{F}^{-1}\right)}{\partial u}\frac{\partial \textbf{v}_{u}}{\partial t}+J\frac{\partial \textbf{z}_{u}}{\partial t}=\nabla \cdot \left(J\nabla \textbf{z}_{u}\textbf{F}^{-1}\textbf{F}^{-T}\right)\\
			&+\nabla \cdot \left(\nabla \textbf{v}_{u}\frac{\partial \left(J\textbf{F}^{-1}\textbf{F}^{-T}\right)}{\partial u}\right)-\nabla \cdot \left(J\nabla \left(\textbf{S}\textbf{v}_{u}\right)\textbf{F}^{-1}\textbf{F}^{-T}\right)\\
			&-\frac{\partial (J\textbf{F}^{-T})}{\partial u}\nabla p_{u}-J\textbf{F}^{-T}\nabla \left(\frac{\partial p_{u}}{\partial u}\right)+\frac{\partial J}{\partial u}\textbf{f}, \quad in \ \varOmega(0)
		\end{split}
	\end{equation}
	with time periodic condition $\textbf{z}_u(0)=\textbf{z}_u(1)$ and homogeneous Dirichlet boundary condition. Here, $S=J^{-1}\textbf{F}\frac{\partial (J\textbf{F}^{-1})}{\partial u}$.
	
	It is easy to check that both the determinant $J$ and the elements associated with the matrix $\textbf{F}$ are non-negative and bounded from below and above,
	\begin{equation}\label{the32}
		\begin{split}
			0< C_{\min}\leq \left\{J,\frac{\partial J}{\partial u},\Vert \textbf{F}\Vert_{L^2},\left\Vert \frac{\partial \textbf{F}}{\partial u}\right\Vert_{L^2},\Vert \textbf{S}\Vert_{L^2}\right\}\leq C_{\max}.
		\end{split}
	\end{equation}
	
	Multiplying the second equation of (\ref{the31}) by $\textbf{z}_u$ and integrating with respect to the space variables on the domain $\varOmega(0)$, we get
	\begin{equation}\label{the33}
		\begin{split}
			& \frac{\partial J}{\partial u}\left(\frac{\partial \textbf{v}_{u}}{\partial t},\textbf{z}_{u}\right)+\frac{1}{2}J\frac{d}{dt}\left\Vert \textbf{z}_{u}\right\Vert^2_{L^2}+\left(J\nabla \textbf{z}_{u}\textbf{F}^{-1},\nabla \textbf{z}_{u}\textbf{F}^{-1}\right)=\left(\textbf{F}\frac{\partial \left(J\textbf{F}^{-1}\right)}{\partial u}\frac{\partial \textbf{v}_{u}}{\partial t},\textbf{z}_{u}\right)\\
			&-\left(\nabla \textbf{v}_{u}\frac{\partial \left(J\textbf{F}^{-1}\textbf{F}^{-T}\right)}{\partial u},\nabla \textbf{z}_{u}\right)+\left(\nabla \left(\textbf{S}\textbf{v}_{u}\right)\textbf{F}^{-1}\textbf{F}^{-T},\nabla \textbf{z}_{u}\right)\\
			&-\left(\frac{\partial (J\textbf{F}^{-T})}{\partial u}\nabla p_{u},\textbf{z}_{u}\right)+\frac{\partial J}{\partial u}\left(\textbf{f},\textbf{z}_{u}\right).
		\end{split}
	\end{equation}
	
	Combining Young's inequality, Poincar\'{e} inequality, (\ref{the29}) and (\ref{the32}), we can derive the following inequalities,
	\begin{equation}\label{the34}
		\begin{split}
			\frac{d}{dt}\left\Vert \textbf{z}_{u}\right\Vert^2_{L^2}+C_1\left\Vert \textbf{z}_{u}\right\Vert^2_{L^2}\leq \frac{d}{dt}\left\Vert \textbf{z}_{u}\right\Vert^2_{L^2}+C_1\gamma\left\Vert \nabla \textbf{z}_{u}\right\Vert^2_{L^2}\leq C_2,
		\end{split}
	\end{equation}
	where $\gamma$ is the Poincar\'{e} constant.
	
	Multiplying $e^{C_{1}t}$ on both sides and integrating in $t$ and using the periodic condition of $\textbf{z}_u$, we obtain
	\begin{equation}\label{the38}
		\begin{split}
			\left\Vert \textbf{z}_{u}(t)\right\Vert^2_{L^2}\leq \frac{C_2}{C_1}.
		\end{split}
	\end{equation}
	
	Integrating (\ref{the34}) in $t$ on [0,1], we obtain
	\begin{equation}\label{the39}
		\begin{split}
			\int_0^1\left\Vert \nabla \textbf{z}_{u}\right\Vert^2_{L^2}dt\leq \frac{C_2}{C_1\gamma}.
		\end{split}
	\end{equation}
	
	Multiplying the second equation of (\ref{the31}) by $-P\left[\nabla \cdot \left(J\nabla \textbf{z}_{u}\textbf{F}^{-1}\textbf{F}^{-T}\right)\right]$, where $P$ is the $L^2$-orthogonal projection from $L^2(\varOmega(0))^2$ to $H$, and 
	\begin{equation}\label{the40}
		\begin{split}
			H=\left\{\textbf{z}_{u};\nabla\cdot(J\textbf{F}^{-1}\textbf{z}_u)=0 \ in \ \varOmega(0) \ and \ (J\textbf{F}^{-1}\textbf{z}_u)\cdot \vec{n}|_{\partial\varOmega(0)}=0\right\}.
		\end{split}
	\end{equation}
	
	Integrating on the domain $\varOmega(0)$, using the properties of the projection (see Section. 2 in \cite{Robinson2016three}) and Young inequality, we have
	\begin{equation}\label{the41}
		\begin{split}
			\frac{J^2}{2}\frac{d}{dt}\left\Vert \left(\nabla \textbf{z}_{u}\textbf{F}^{-1}\right)\right\Vert^2_{L^2}+C_3\left\Vert P\left[\nabla \cdot \left(J\nabla \textbf{z}_{u}\textbf{F}^{-1}\textbf{F}^{-T}\right)\right]\right\Vert^2_{L^2}\leq C_3
		\end{split}
	\end{equation}
	with time periodic condition $\left\Vert \left(\nabla \textbf{z}_{u}\textbf{F}^{-1}\right)\right\Vert_{L^2}(0)=\left\Vert \left(\nabla \textbf{z}_{u}\textbf{F}^{-1}\right)\right\Vert_{L^2}(1)$.
	
	Integrating (\ref{the41}) from 0 to 1 and following the proof of  in \cite[Theorem 2.22]{Robinson2016three}, we obtain
	\begin{equation}\label{the42}
		\begin{split}
			&C_4\int_{0}^{1}\left\Vert \left[\nabla \cdot \left(J\nabla \textbf{z}_{u}\textbf{F}^{-1}\textbf{F}^{-T}\right)\right]\right\Vert^2_{L^2}dt\leq C_3\int_{0}^{1}\left\Vert P\left[\nabla \cdot \left(J\nabla \textbf{z}_{u}\textbf{F}^{-1}\textbf{F}^{-T}\right)\right]\right\Vert^2_{L^2}dt\leq C_5.
		\end{split}
	\end{equation}
	
	Based on the relationship between $\textbf{w}_u$ and $\textbf{z}_u$ and with the help of equation (\ref{the29}), we have
	\begin{equation}\label{the43}
		\begin{split}
			\left\Vert \textbf{w}_{u}(t)\right\Vert^2_{L^2(\varOmega(u))}\leq C_{*}^{-1}\left\Vert \textbf{w}_{u}(t)\right\Vert^2_{L^2(\varOmega(0))}\leq C,
		\end{split}
	\end{equation}
	and 
	\begin{equation}\label{the44}
		\begin{split}
			\int_{0}^{1}\left\Vert \textbf{w}_{u}(t)\right\Vert^2_{H^2(\varOmega(u))}dt\leq C_{*}^{-1}\int_{0}^{1}\left\Vert \textbf{w}_{u}(t)\right\Vert^2_{H^2(\varOmega(0))}dt\leq C.
		\end{split}
	\end{equation}
	
	The proof is completed.
\end{proof}

\begin{remark}
	It is possible to obtain the same results for a nonhomogeneous Dirichlet boundary condition since it can be transformed to the homogeneous Dirichlet boundary condition with appropriate assumptions on the smoothness of the domain and the regularity of the boundary value function.
\end{remark}

The following lemma gives the error estimate between the solution $\textbf{v}(t)$ of the original problem (\ref{the1}) and the time periodic solution $\textbf{v}_{u(t)}(t)$. For a fixed $u(t)$, we have the family of periodic solutions
\begin{equation}\label{the46}
	\begin{split}
		&\text{div} \ \textbf{v}_{u(t)}(s)=0, \quad \rho\frac{\partial}{\partial s}\textbf{v}_{u(t)}(s)=\rho\nu\Delta \textbf{v}_{u(t)}(s)-\nabla p_{u(t)}(s)+\textbf{f}(s), \quad in \ \Omega(u(t))\\
		&\textbf{v}_{u(t)}(0)=\textbf{v}_{u(t)}(1), \ s\in[0,1], \ t\in[0,T]. 
	\end{split}
\end{equation}
We note here that although $\textbf{v}_{u(t)}(s)$ is defined on $[0,1]$, it can be periodically extended to $[0,T]$.

\begin{lemma}
	\label{lemma3}
	Let $u\in C^1[0,T]$, $\textbf{v}(t)$ be the solution of the original Stokes problem and the initial values of $\textbf{v}$ and $\textbf{v}_{u(t)}$ are identical, i.e., $\textbf{v}_{u(0)}(0)=\textbf{v}_0$. For $t\leq O(\varepsilon^{-\frac{1}{\alpha}})$, it holds that
	\begin{equation}\label{the47}
		\begin{split}
			\Vert \textbf{v}_{u(t)}(t)-\textbf{v}(t)\Vert_{L^2}^2\leq C_{L36a}\varepsilon^{\frac{\xi}{2}} \ and \ \int_{t}^{t+1}\Vert \textbf{v}_{u(t)}(t)-\textbf{v}(t)\Vert_{H^2}^2dt\leq C_{L36b}\varepsilon,
		\end{split}
	\end{equation}
	where $C_{L36a}$ and $C_{L36b}$ depend on $C_{L34a}$ and the constants in Assumptions \ref{assumption1}. $\xi=\min\{1,\frac{\nu e}{\alpha\gamma}\}$ and $\gamma$ is the the Poincar\'{e} constant. 
\end{lemma}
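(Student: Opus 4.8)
The plan is to derive an evolution equation for the difference $\mathbf e(t)=\mathbf v(t)-\mathbf v_{u(t)}(t)$, to show that $\mathbf v_{u(t)}(t)$ fails to solve the moving-domain Stokes system only by a ``defect'' of size $O(|u'(t)|)$ (which is where Lemma~\ref{lemma2} enters), and then to close a parabolic energy estimate, summing that defect against the exponential-in-time weight by means of estimate \eqref{the204} of Lemma~\ref{lemma11}.

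\emph{Setting up the error equation.} Following the ALE framework of Lemma~\ref{lemma2}, I would transport both $\mathbf v(t)$ and $\mathbf v_{u(t)}(t)$ to the fixed reference domain $\varOmega(0)$ through the map $T_{u(t)}$ of \eqref{the26}; this makes $\partial_t$ meaningful up to the moving boundary, and \eqref{the29} lets us pass freely between norms on $\varOmega(u(t))$ and on $\varOmega(0)$. On $\varOmega(0)$, $\mathbf v_{u(t)}(\cdot)$ solves the \emph{frozen} ALE periodic Stokes system \eqref{the28} (with $u$ fixed, so $J,\mathbf F$ are $t$-independent for that problem), whereas the pull-back of $\mathbf v(t)$ picks up the domain-velocity term $\propto\partial_t T_{u(t)}=\partial_t\gamma\cdot(\ldots)$. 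Taking the total $t$-derivative of $\mathbf v_{u(t)}(t)$ by the chain rule produces $[\partial_s\mathbf v_{u(t)}(s)]_{s=t}+u'(t)\,\partial_u\mathbf v_{u(t)}(t)$; the first term is exactly the elliptic-plus-force part of the Stokes operator (so it cancels the corresponding part of the equation for $\mathbf v$, including the identical force $\mathbf f(t)$), and the second is controlled by $\|\partial_u\mathbf v_u(t)\|_{L^2}^2\le C_{L34a}$ from \eqref{the23}. Consequently $\mathbf e$ solves an ALE--Stokes system with homogeneous boundary data, divergence constraint $\nabla\cdot(J\mathbf F^{-1}\mathbf e)=0$, and right-hand side $\mathbf g(t)$ built solely from the pieces $\propto u'(t)\,\partial_u\mathbf v_{u(t)}(t)$ and $\propto\partial_t\gamma\cdot\nabla\mathbf v(t)$, together with the $\partial_t J$, $\partial_t\mathbf F$ remainders; using \eqref{the32}, Assumption~\ref{assumption2} and \eqref{the23} one gets $\|\mathbf g(t)\|_{L^2(\varOmega(0))}\le C\,|u'(t)|$. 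Finally, the initial-time assumption $\mathbf v_{u(0)}(0)=\mathbf v_0$ gives $\mathbf e(0)=0$.

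\emph{The two estimates.} Testing the error equation with $\mathbf e$ and integrating over $\varOmega(0)$: the pressure term drops by the divergence constraint and the Dirichlet condition, the viscous term gives a coercive contribution $\nu\|\nabla\mathbf e\|_{L^2}^2$ (up to the bounded weights of \eqref{the32}), the $\partial_t J$-remainder is absorbed, and the forcing is bounded by $C|u'(t)|\,\|\mathbf e\|_{L^2}$. Using the Poincar\'e inequality (constant $\gamma$) to convert part of the viscous term into $\tfrac{\nu}{\gamma}\|\mathbf e\|_{L^2}^2$ and dividing by $\|\mathbf e\|_{L^2}$ gives $\frac{d}{dt}\|\mathbf e\|_{L^2}+\tfrac{\nu}{\gamma}\|\mathbf e\|_{L^2}\le C|u'(t)|$, hence $\|\mathbf e(t)\|_{L^2}\le C\int_0^t e^{-\frac{\nu}{\gamma}(t-s)}|u'(s)|\,ds$; estimate \eqref{the204} with $\lambda=\nu/\gamma$ --- which is precisely why $\xi=\min\{1,\nu e/(\alpha\gamma)\}$ appears --- bounds the right side by $O(\varepsilon^{\xi})$, and squaring gives the first inequality of \eqref{the47}. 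For the $H^2$-in-space inequality I would run the higher-order estimate as in \eqref{the41}--\eqref{the42}, testing against the ALE Stokes operator $-P[\nabla\cdot(J\nabla\mathbf e\,\mathbf F^{-1}\mathbf F^{-T})]$, integrating over one unit period $[t,t+1]$, and closing with the period-averaged bound $\int_0^1\|\partial_u\mathbf v_u\|_{H^2}^2\,dt\le C_{L34b}$ of \eqref{the23}, the already-established $L^2$ control of $\mathbf e$, and $\int_t^{t+1}|u'(s)|^2\,ds=O(\varepsilon^2)$ from \eqref{the203}, which yields $\int_t^{t+1}\|\mathbf e\|_{H^2}^2\,dt=O(\varepsilon)$.

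\emph{Main obstacle.} The crux is the moving-domain bookkeeping in setting up the error equation: Lemma~\ref{lemma2} dealt only with a frozen domain, and here one must carefully account, on the reference domain, both for the domain-velocity term attached to $\mathbf v(t)$ and for the $u'(t)$-scaled $\partial_u$-term attached to $\mathbf v_{u(t)}(t)$ (including the $C^1$-in-phase matching of the periodic extension at integer times), and verify that every one of these contributions is genuinely $O(|u'(t)|)$ with a constant uniform in $t\le O(\varepsilon^{-1/\alpha})$ --- it is only this uniform-in-$t$, first-order-in-$u'$ structure, combined with the non-local integral estimate \eqref{the204}, that keeps the error from accumulating over the long horizon. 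A secondary technical point is obtaining a pointwise-in-$t$ $H^1$ bound on $\mathbf e$ to serve as initial data for the period-interval $H^2$ estimate.
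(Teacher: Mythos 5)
Your overall strategy coincides with the paper's: the same chain-rule decomposition producing the defect $u'(t)\,\partial_u\mathbf{v}_{u(t)}$ (the paper's \eqref{the49}--\eqref{the51}), the same $L^2$ energy estimate closed by the Poincar\'e inequality and the exponentially weighted integral \eqref{the204} with $\lambda=\nu/\gamma$ (which is indeed where $\xi$ comes from), and the same Stokes-projection argument for the period-averaged $H^2$ bound. The one genuinely different choice is your moving-domain bookkeeping: you pull both solutions back to $\varOmega(0)$ via the ALE map \eqref{the26} and accept the resulting $\partial_t J$, $\partial_t\mathbf{F}$ commutators, whereas the paper stays on the physical domain $\varOmega(u(t))$ and disposes of the moving boundary in one line via the transport identity \eqref{the53}, whose boundary flux term has a favorable sign thanks to Assumption \ref{assumption3} ($u'\ge 0$, $\partial\gamma/\partial u\ge 0$). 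The paper's route is simpler here and is in fact the point of its front-tracking philosophy; your ALE route can be made to work, but it obliges you to verify that every commutator is $O(|u'(t)|)$ uniformly in $t$, which you correctly flag as the crux.

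Two points need repair. First, your claim that $\int_t^{t+1}|u'(s)|^2\,ds=O(\varepsilon^2)$ ``from \eqref{the203}'' does not follow: \eqref{the203} controls only $\int_t^{t+1}u'\,ds=O(\varepsilon)$, and in the fractional case one does \emph{not} have the pointwise bound $u'=O(\varepsilon)$ --- this is precisely the distinction drawn in Remark \ref{remark3.8}. What is available is $\int_t^{t+1}|u'|^2\le\bigl(\sup|u'|\bigr)\int_t^{t+1}u'=O(\varepsilon)$ with a bounded but non-small constant, and this weaker bound still suffices for the claimed $\int_t^{t+1}\Vert\mathbf{e}\Vert_{H^2}^2\,dt=O(\varepsilon)$; note the paper deliberately keeps its forcing terms linear in $u'$ for this reason (see \eqref{the56} and \eqref{the59}). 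Second, the ``secondary technical point'' you defer --- controlling the time-boundary terms $\Vert\nabla\mathbf{e}(t)\Vert_{L^2}^2-\Vert\nabla\mathbf{e}(t+1)\Vert_{L^2}^2$ that survive after integrating the $H^2$-level inequality over $[t,t+1]$ --- is not optional: the ``already-established $L^2$ control'' does not control $\nabla\mathbf{e}$ pointwise in time, and without this step the integrated inequality only yields $\int_t^{t+1}\Vert\mathbf{e}\Vert_{H^2}^2\,dt\le C$. The paper supplies exactly this ingredient by additionally testing the error equation with $\mathbf{w}_t$ to obtain \eqref{the56}--\eqref{the57}; you should add the analogous step before the projection estimate.
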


\begin{proof}
	For $\textbf{v}_{u(t)}(t)$, by the chain rule, it holds that
	\begin{equation}\label{the49}
		\begin{split}
			\frac{\partial }{\partial t}\textbf{v}_{u(t)}(t)=\frac{\partial }{\partial s}\textbf{v}_{u(t)}(s)\Big|_{s=t}+\frac{\partial \textbf{v}_{u(t)}}{\partial u(t)}(t)\cdot u'(t).
		\end{split}
	\end{equation}
	
	Combining (\ref{the46}), $\textbf{v}_{u(t)}$ satisfies the following PDE
	\begin{equation}\label{the50}
		\begin{split}
			\rho\frac{\partial}{\partial t}\textbf{v}_{u(t)}(t)-\rho\frac{\partial \textbf{v}_{u(t)}}{\partial u(t)}(t)\cdot u'(t)=\rho\nu\Delta \textbf{v}_{u(t)}(t)-\nabla p_{u(t)}+\textbf{f}(t),\quad \textbf{v}_{u(0)}(0)=\textbf{v}_0.
		\end{split}
	\end{equation}
	
	Let $\textbf{w}(t)=\textbf{v}_{u(t)}(t)-\textbf{v}(t)$, we have
	\begin{equation}\label{the51}
		\begin{split}
			&\text{div} \textbf{w}=0, \quad \rho\frac{\partial}{\partial t}\textbf{w}(t)=\rho\frac{\partial \textbf{v}_{u(t)}}{\partial u(t)}(t)\cdot u'(t)+\rho\nu\Delta \textbf{w}(t)-\nabla p_{u(t)}+\nabla p,\\
			& \textbf{w}(0)=0,\quad \textbf{w}|_{\partial\varOmega(u(t))}=0.
		\end{split}
	\end{equation}
	
	Multiplying (\ref{the51}) by $\textbf{w}$ and then integrating on the domain $\varOmega(u(t))$ yields
	\begin{equation}\label{the52}
		\begin{split}
			\frac{1}{2}\int_{\varOmega(u(t))}\frac{d}{d t}\left(\textbf{w}(t)\right)^2+\nu\Vert\nabla \textbf{w}(t)\Vert_{L^2}^2=\left(\frac{\partial \textbf{v}_{u(t)}}{\partial u(t)}(t)\cdot u'(t),\textbf{w}(t)\right).
		\end{split}
	\end{equation}
	
	For the first term, with the help of Assumption \ref{assumption3}, and integrating over the domain $\varOmega(u(t))$, we have
	\begin{equation}\label{the53}
		\begin{split}
			&\frac{d }{d t}\int_{\varOmega(u(t))}\left(\textbf{w}(t)\right)^2=\frac{d }{d t}\int_{-a}^{a}\int_{-b}^{b-\gamma(u,x)}\left(\textbf{w}(t)\right)^2dxdy\\
			&=-u'\int_{-a}^{a}\left(\textbf{w}(t,x,{b-\gamma(u,x)})\right)^2\cdot \frac{\partial \gamma(u,x)}{\partial u}dx
			+\int_{\varOmega(u(t))}\frac{d}{d t}\left(\textbf{w}(t)\right)^2\leq \int_{\varOmega(u(t))}\frac{d}{d t}\left(\textbf{w}(t)\right)^2.
		\end{split}
	\end{equation}
	
	Combining Poincar\'e inequality, Young's inequality, \textbf{Lemma \ref{lemma2}}. Integrating from $0$ to $t$ and using \textbf{Lemma \ref{lemma11}}, we obtain
	\begin{equation}\label{the54}
		\begin{split}
			\Vert \textbf{w}(t)\Vert_{L^2}^2\leq C\int_{0}^{t}e^{-\lambda(t-s)}u'(s)ds=O(\varepsilon^{\xi}), 
		\end{split}
	\end{equation}
	where $\lambda=\frac{\nu}{\gamma}$, $\gamma$ is the the Poincar\'{e} constant and $\xi=\min\{1,\frac{\nu e}{\alpha\gamma}\}$.
	
	Multiplying (\ref{the51}) by $\textbf{w}_t$ and integrating on the domain $\varOmega(u(t))$, we have
	\begin{equation}\label{the56}
		\begin{split}
			\left\Vert \textbf{w}_t\right\Vert_{L^2}^2+\frac{\nu}{2}\frac{d }{dt}\left\Vert \nabla \textbf{w}\right\Vert^2_{L^2}\leq Cu'(t)+\frac{1}{2}\left\Vert \textbf{w}_t\right\Vert_{L^2}^2.
		\end{split}
	\end{equation}
	
	It thus follows that
	\begin{equation}\label{the57}
		\begin{split}
			\left\Vert \nabla \textbf{w}(t+1)\right\Vert^2_{L^2}-\left\Vert \nabla \textbf{w}(t)\right\Vert^2_{L^2}\leq C\varepsilon.
		\end{split}
	\end{equation}
	
	Multiplying (\ref{the51}) by $-P\Delta \textbf{w}$, $P$ is the orthogonal projection from $L^2(\varOmega(u(t)))^2$ to $G$
	\begin{equation}\label{the58}
		\begin{split}
			G=\left\{\textbf{w}; \text{div}\ \textbf{w}=0 \ \text{in} \ \varOmega(u(t)) \ \text{and} \ \textbf{w}\cdot\vec{n}|_{\partial\varOmega(u(t))=0} \right\}.
		\end{split}
	\end{equation}
	
	Integrating on the domain $\varOmega(u(t))$, we obtain
	\begin{equation}\label{the59}
		\begin{split}
			\frac{1}{2}\frac{d}{dt}\left\Vert \nabla \textbf{w}\right\Vert_{L^2}^2+C\left\Vert \textbf{w}\right\Vert_{H^2}^2\leq Cu'(t).
		\end{split}
	\end{equation}
	
	The proof concludes with integrating (\ref{the59}) in $t$ on $[t,t+1]$ and using (\ref{the57}), 
	\begin{equation}\label{the60}
		\begin{split}
			\int_{t}^{t+1}\Vert\textbf{w}(t)\Vert_{H^2}^2dt\leq C\varepsilon.
		\end{split}
	\end{equation}
\end{proof}

Next, we prove a theorem to show the error estimation between the auxiliary time periodic problem (\ref{the21}) and the original problem (\ref{the1}).

\begin{theorem}
	\label{theorem4}
	Let $u\in C^1[0,T]$. $(\textbf{v},u)$ be defined as the Stokes problem corresponding to (\ref{the1}) and $(\textbf{v}_{U(t)},U(t))$ be defined as the Stokes problem corresponding to (\ref{the21}). With the initial value $\textbf{v}_{U(0)}(0)=\textbf{v}_0$, for $t\in[0,T=O(\varepsilon^{-\frac{1}{\alpha}})]$, it holds that
	\begin{equation}\label{the61}
		\begin{split}
			|u(t)-U(t)|\leq C_{T37a}\varepsilon^{\frac{1}{2}}, \quad \Vert \textbf{v}_{U(t)}(t)-\textbf{v}(t)\Vert_{L^2}\leq C_{T37b}\varepsilon^{\frac{\xi}{2}},
		\end{split}
	\end{equation}
	and 
	\begin{equation} \label{the300}
		\int_t^{t+1}\Vert\textbf{v}(s)-\textbf{v}_{\overline{U}(t)}(s)\Vert_{H^2} ds\leq C_{T37c}\varepsilon^{\frac{1}{2}}.
	\end{equation}
	$C_{T37a}$ and $C_{T37b}$ depend on $C_{L34b}$, $C_{L36a}$,  $C_{L36b}$ and the constants in Assumption \ref{assumption1}. $\xi$ follows the definition in \textbf{Lemma \ref{lemma3}}.
\end{theorem}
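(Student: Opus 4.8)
The plan is to use the auxiliary variable $\overline{U}(t)$ of \eqref{the11} as a bridge: $|u(t)-U(t)|\le|u(t)-\overline{U}(t)|+|\overline{U}(t)-U(t)|$, where the first term is controlled by the fractional identities of Section~\ref{section3} and the second by a fractional Gronwall estimate against the near-equation \eqref{the20} that $\overline{U}$ satisfies. First I would record the elementary fact that, combining the representation \eqref{the211} of Lemma~\ref{lemma10} with $u'\ge0$, $\int_t^{t+1}u'=O(\varepsilon)$, and the bound $I^{\alpha}_{0^+}\big((t+1)^{1-\alpha}-t^{1-\alpha}\big)\le\Gamma(2-\alpha)$ used in the proofs of Lemma~\ref{lemma11} and Lemma~\ref{lemma1}, one gets $|u(s)-\overline{U}(t)|\le C\varepsilon$ for every $s\in[t,t+1]$, in particular $|u(t)-\overline{U}(t)|\le C\varepsilon$. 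Moreover $\overline{U}(0)=u_0=U(0)$, so $E:=\overline{U}-U$ vanishes at $t=0$.

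Next I would establish \eqref{the300} (equivalently \eqref{the19}), which simultaneously justifies the passage from \eqref{the16} to \eqref{the20}. Split $\textbf{v}(s)-\textbf{v}_{\overline{U}(t)}(s)=\big(\textbf{v}(s)-\textbf{v}_{u(s)}(s)\big)+\big(\textbf{v}_{u(s)}(s)-\textbf{v}_{\overline{U}(t)}(s)\big)$. For the first piece, Cauchy--Schwarz on $[t,t+1]$ turns the bound $\int_t^{t+1}\|\textbf{v}_{u(s)}(s)-\textbf{v}(s)\|_{H^2}^2\,ds\le C_{L36b}\varepsilon$ of Lemma~\ref{lemma3} into $\int_t^{t+1}\|\textbf{v}(s)-\textbf{v}_{u(s)}(s)\|_{H^2}\,ds\le\sqrt{C_{L36b}}\,\varepsilon^{1/2}$. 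For the second piece I would write $\textbf{v}_{u(s)}(s)-\textbf{v}_{\overline{U}(t)}(s)=\int_{\overline{U}(t)}^{u(s)}\frac{\partial\textbf{v}_w}{\partial w}(s)\,dw$; since by the first paragraph all these integration intervals lie inside a fixed $O(\varepsilon)$-window about $\overline{U}(t)$, Fubini followed by Cauchy--Schwarz in $s$ and the period-averaged bound $\int_0^1\|\partial\textbf{v}_w/\partial w\|_{H^2}^2\,ds\le C_{L34b}$ of Lemma~\ref{lemma2} give $\int_t^{t+1}\|\textbf{v}_{u(s)}(s)-\textbf{v}_{\overline{U}(t)}(s)\|_{H^2}\,ds\le C\varepsilon\sqrt{C_{L34b}}$. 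Summing the two pieces proves \eqref{the300}; inserting it into \eqref{the16}--\eqref{the18} through the Lipschitz bound \eqref{the7} yields \eqref{the20}, i.e.\ $D^{\alpha}_{0^+}\overline{U}(t)=\varepsilon\int_t^{t+1}R(\textbf{v}_{\overline{U}(t)}(s),\overline{U}(t))\,ds+\rho_\varepsilon(t)$ with $|\rho_\varepsilon(t)|\le C\varepsilon^{3/2}$.

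Subtracting this from the $U$-equation in \eqref{the21}, using \eqref{the7} together with $\textbf{v}_{\overline{U}(t)}(s)-\textbf{v}_{U(t)}(s)=\int_{U(t)}^{\overline{U}(t)}\frac{\partial\textbf{v}_w}{\partial w}(s)\,dw$ and once more the $H^2$ bound of Lemma~\ref{lemma2}, I obtain $|D^{\alpha}_{0^+}E(t)|\le C\varepsilon|E(t)|+C\varepsilon^{3/2}$ with $E(0)=0$; applying $I^{\alpha}_{0^+}$ and the identity $I^{\alpha}_{0^+}D^{\alpha}_{0^+}E=E-E(0)$ of \eqref{the210} gives
\[
|E(t)|\le\frac{C\varepsilon}{\Gamma(\alpha)}\int_0^t(t-s)^{\alpha-1}|E(s)|\,ds+\frac{C\varepsilon^{3/2}}{\Gamma(\alpha+1)}t^{\alpha}.
\]
On $t\le T=O(\varepsilon^{-1/\alpha})$ the inhomogeneous term is $O(\varepsilon^{3/2}\cdot\varepsilon^{-1})=O(\varepsilon^{1/2})$, so the standard fractional (Mittag--Leffler) Gronwall inequality yields $|E(t)|\le C\varepsilon^{1/2}E_{\alpha}(C\varepsilon t^{\alpha})$, and since $\varepsilon t^{\alpha}=O(1)$ on this horizon the factor $E_{\alpha}(C\varepsilon t^{\alpha})$ is $O(1)$; hence $|\overline{U}(t)-U(t)|\le C\varepsilon^{1/2}$ and, with the first paragraph, $|u(t)-U(t)|\le C_{T37a}\varepsilon^{1/2}$. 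For the velocity, write $\textbf{v}_{U(t)}(t)-\textbf{v}(t)=\big(\textbf{v}_{U(t)}(t)-\textbf{v}_{u(t)}(t)\big)+\big(\textbf{v}_{u(t)}(t)-\textbf{v}(t)\big)$: the first term is $\le|U(t)-u(t)|\,\|\partial\textbf{v}_w/\partial w(t)\|_{L^2}\le C\varepsilon^{1/2}\sqrt{C_{L34a}}$ by the pointwise $L^2$ bound of Lemma~\ref{lemma2}, the second is $\le\sqrt{C_{L36a}}\,\varepsilon^{\xi/2}$ by Lemma~\ref{lemma3}, and since $\xi\le1$ gives $\varepsilon^{1/2}\le\varepsilon^{\xi/2}$ we conclude $\|\textbf{v}_{U(t)}(t)-\textbf{v}(t)\|_{L^2}\le C_{T37b}\varepsilon^{\xi/2}$.

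I expect the fractional Gronwall step to be the crux: unlike the integer-order case there is no exponential kernel to absorb the accumulated defect, so one must track the Mittag--Leffler amplification precisely and use that $I^{\alpha}_{0^+}\rho_\varepsilon$ is exactly of size $\varepsilon^{1/2}$ --- no better --- at the relevant horizon $T=O(\varepsilon^{-1/\alpha})$, which is what degrades the global rate from the $O(\varepsilon^{2})$ available when $\alpha=1$ to $O(\varepsilon^{1/2})$. A second, more bookkeeping difficulty is that Lemma~\ref{lemma2} supplies only an $L^2$ bound on $\partial\textbf{v}_w/\partial w$ pointwise in time and an $H^2$ bound averaged over one period, so each comparison of two frozen-domain periodic fields must be organized (via Fubini and Cauchy--Schwarz in the fast variable $s$) to invoke exactly those two norms and nothing stronger; the moving domain $\varOmega(u(t))$ must also be handled with the sign of the boundary term as in \eqref{the53} via Assumption~\ref{assumption3}.
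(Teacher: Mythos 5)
Your proposal is correct and follows essentially the same route as the paper: both bridge $u$ to $U$ through the auxiliary variable $\overline{U}$, control $|u-\overline{U}|$ by Lemmas \ref{lemma10}--\ref{lemma1}, split the velocity defect through the frozen periodic solution $\textbf{v}_{u}$ using Lemmas \ref{lemma2} and \ref{lemma3} with Cauchy--Schwarz, and close with a fractional Gronwall inequality on the horizon $T=O(\varepsilon^{-1/\alpha})$, where the $O(\varepsilon^{3/2})$ defect integrates up to $O(\varepsilon^{1/2})$. The only differences are organizational --- you establish (\ref{the300}) first and write the defect equation of $\overline{U}$ against $\textbf{v}_{\overline{U}(t)}$, whereas the paper subtracts the two reaction integrals directly and records (\ref{the300}) at the end --- and these do not change the substance of the argument.
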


\begin{proof}
	Let $w(t)=U(t)-\overline{U}(t)=U(t)-u_0-(I^{\alpha}_{0^+}\int_{s}^{s+1}\frac{d^\alpha u(r)}{dr^\alpha}dr)(t)$, we have
	\begin{equation}\label{the62}
		\begin{split}
			&\frac{d^\alpha}{dt^\alpha}w(t)=\varepsilon\int_t^{t+1}R\left(\textbf{v}_{U(t)}(s),U(t)\right)ds-\varepsilon \int_t^{t+1}R\left(\textbf{v}(s),u(s)\right)ds\\\
			&=\varepsilon\int_t^{t+1}\left(R(\textbf{v}_{U(t)}(s),U(t))-R(\textbf{v}(s),u(s))\right)ds.
		\end{split}
	\end{equation}
	
	Using Assumption \ref{assumption1}, we obtain
	\begin{equation}\label{the63}
		\begin{split}
			&\left|\varepsilon\int_t^{t+1}\left(R(\textbf{v}_{U(t)}(s),U(t))-R(\textbf{v}(s),u(s))\right)ds\right|\\
			&\leq \varepsilon C_{A1b}\int_t^{t+1}|U(t)-u(s)|ds+\varepsilon C_{A1b}\int_t^{t+1}\Vert \textbf{v}_{U(t)}(s)-\textbf{v}(s)\Vert_{H^2} ds.
		\end{split}
	\end{equation}
	
	For the first term, using (\ref{the15}), we have
	\begin{equation}\label{the64}
		\begin{split}
			\int_t^{t+1}|U(t)-u(s)|ds&\leq \int_t^{t+1}\left|U(t)-\overline{U}(t)\right|ds+\int_t^{t+1}\left|u(s)-\overline{U}(t)\right|ds\\
			&\leq |w(t)|+C\varepsilon.
		\end{split}
	\end{equation}
	
	For the second term, combining \textbf{Lemma \ref{lemma2}}, \textbf{Lemma \ref{lemma3}} and Cauchy-Schwarz inequality, we have
	\begin{equation}\label{the65}
		\begin{split}
			&\int_t^{t+1}\Vert \textbf{v}_{U(t)}(s)-\textbf{v}(s)\Vert_{H^2} ds\\
			\leq& \int_t^{t+1}\Vert \textbf{v}_{U(t)}(s)-\textbf{v}_{u(s)}(s)\Vert_{H^2} ds+\int_t^{t+1}\Vert \textbf{v}_{u(s)}(s)-\textbf{v}(s)\Vert_{H^2} ds\\
			\leq& \int_t^{t+1}|U(t)-u(s)|ds+\left(\int_t^{t+1}\Vert \textbf{v}_{u(s)}(s)-\textbf{v}(s)\Vert_{H^2}^2 ds\right)^{\frac{1}{2}}\\
			\leq& |w(t)|+C\varepsilon+C\varepsilon^{\frac{1}{2}}\leq |w(t)|+C\varepsilon^{\frac{1}{2}}.
		\end{split}
	\end{equation}
	
	Taking $I^{\alpha}_{0^+}$ on both sides of (\ref{the62}), for $T=O(\varepsilon^{-\frac{1}{\alpha}})$, we obtain
	\begin{equation}\label{the66}
		\begin{split}
			|w(t)|\leq C\varepsilon^{\frac{1}{2}}+\frac{C\varepsilon}{\Gamma(\alpha)}\int_0^{t}(t-s)^{\alpha-1}|w(s)|ds,
		\end{split}
	\end{equation}
	
	Applying Gronwall's inequality, it is easy to obtain
	\begin{equation}\label{the67}
		\begin{split}
			|w(t)|\leq C\varepsilon^{\frac{1}{2}} e^{C\varepsilon t^\alpha}\leq C\varepsilon^{\frac{1}{2}}.
		\end{split}
	\end{equation}
	
	Using (\ref{the15}), \textbf{Lemma \ref{lemma2}} and \textbf{Lemma \ref{lemma3}}, we obtain
	\begin{equation}\label{the68}
		\begin{split}
			|U(t)-u(t)|\leq |w(t)|+\left|\overline{U}(t)-u(t)\right|\leq C\varepsilon^{\frac{1}{2}}+C\varepsilon\leq C_{T37a}\varepsilon^{\frac{1}{2}},
		\end{split}
	\end{equation}
	\begin{equation}
		\begin{split}
			&\Vert \textbf{v}_{U(t)}(t)-\textbf{v}(t)\Vert_{L^2}\leq \Vert \textbf{v}_{U(t)}(t)-\textbf{v}_{u(t)}(t)\Vert_{L^2}+\Vert \textbf{v}_{u(t)}(t)-\textbf{v}(t)\Vert_{L^2}\\
			&\leq C_{L34a}|U(t)-u(t)|+C_{L36a}^{1/2}\varepsilon^{\xi}\leq C_{T37b}\varepsilon^{\xi}
		\end{split}
	\end{equation}
	
	(\ref{the300}) is estimated by inserting $\textbf{v}_{u(t)}(t)$ and using \textbf{Lemma \ref{lemma2}} and \textbf{Lemma \ref{lemma3}}, we obtain
	\begin{equation}\label{the69}
		\begin{split}
			&\int_t^{t+1}\Vert\textbf{v}(s)-\textbf{v}_{\overline{U}(t)}(s)\Vert_{H^2} ds\leq \int_t^{t+1}\Vert\textbf{v}(s)-\textbf{v}_{u(t)}(s)\Vert_{H^2} ds\\
			&+\int_t^{t+1}\Vert\textbf{v}_{\overline{U}(t)}(s)-\textbf{v}_{u(t)}(s)\Vert_{H^2} ds\leq C\varepsilon^{\frac{1}{2}}+|w(t)|\leq C_{T37c}\varepsilon^{\frac{1}{2}}.
		\end{split}
	\end{equation}
	The proof is completed.
\end{proof}

\begin{remark}
	\label{remark3.8}
	In the integer order case ($\alpha=1$), it is obvious that $u'(t)=O(\varepsilon)$, the order of (\ref{the54}) and (\ref{the60}) are $O(\varepsilon^2)$ by using Young's inequality. Following the proof of \textbf{Theorem \ref{theorem4}}, we can easily obtain the order of (\ref{the61}) and (\ref{the300}) is $O(\varepsilon)$.
\end{remark}

\section{Numerical methods}
\label{section4}
In this section we introduce simple discrete schemes to approximate this two-way coupled multiscale problem. For the time discretization of fast scale, we use the first-order linear backward Euler scheme. The finite element method is used for spatial discretization, the velocity and pressure are approximated by $\mathcal{P}2-\mathcal{P}1$ elements. For the slow scale, the discretization is based on the $\mathcal{L}1$ scheme (See \cite{Gao2012}). We can easily choose a higher-order scheme for discretization, but the computational cost of long-term simulation are prohibitive.
\subsection{Numerical algorithms}
\label{section4.1}
We split the time interval $[0,T]$ into subintervals of equal size. Define $t_i=i\triangle t \ (i=0,1,2,...,M)$, where $\triangle t=T/M$ is the micro-scale time step. Define $T_j=j\triangle T \ (j=0,1,2,...,N)$, $\triangle T=T/N$ is the macro-scale time step. 

\subsubsection{Direct method}
For fractional differential equations, writes the $\mathcal{L}1$ scheme \cite{Gao2012, Stynes2021survey, Oldham1974fractional} to discretize the Caputo derivative of order $0<\alpha<1$
\begin{equation}\label{the72}
	\begin{split}
		\frac{d^\alpha}{dt^\alpha}u(t_i)=\frac{(\triangle t)^{-\alpha}}{\Gamma(2-\alpha)}\left[a_0u_i-\sum\limits_{j=1}^{i-1}(a_{i-j-1}-a_{i-j})u_j-a_{i-1}u_0\right]+K(u(t_i)),
	\end{split}
\end{equation}
where $a_j=(j+1)^{1-\alpha}-j^{1-\alpha}$ $(j\geq0)$ and $|K(u(t_i))|\leq C_{\alpha}\mathop{max}\limits_{t_0\leq t\leq t_M}|u''(t)|(\Delta t)^{2-\alpha}$.\\

The time discretization scheme of (\ref{the1}) is then as follows
\begin{equation}\label{the73}
	\begin{split}
		&\text{div} \ \textbf{v}_{i}=0, \quad \rho\left(\frac{\textbf{v}_i-\textbf{v}_{i-1}}{\triangle t}+(\textbf{v}_{i-1}\cdot\nabla)\textbf{v}_i\right)=\text{div} \  \sigma(\textbf{v}_i,p_i)+\textbf{f}_i, \quad in \ \Omega(u(t_i))\\
		&\frac{(\triangle t)^{-\alpha}}{\Gamma(2-\alpha)}\left[a_0u_i-\sum\limits_{k=1}^{i-1}(a_{i-k-1}-a_{i-k})u_k-a_{i-1}u_0\right]=\varepsilon R(\textbf{v}_{i-1},u_{i-1}),\\
		&\textbf{v}(0)=\textbf{v}_0, \quad u(0)=u_0.
	\end{split}
\end{equation}
The $\mathcal{L}1$ scheme is an implicit method. For simplicity, we change it to an explicit method and the order is $O(\Delta t)$.

The direct method is used for the forward simulation, and the algorithm is as follows.
\begin{algorithm}[H]
	\caption{Algorithm for the direct method.}
	\label{Algorithm1}
	\begin{algorithmic}
		\STATE{Let $\textbf{v}(0)=\textbf{v}_0$ and $u(0)=u_0$.}
		\FOR{$i=1:M$}
		\STATE{\textbf{Step 1.} Solve the third equation of $(\ref{the73})$
			to obtain $u_i$.}
		\STATE{\textbf{Step 2.} Based on $u_i$ obtained in \textbf{Step 1}, a fixed domain is generated and meshed.}
		\STATE{\textbf{Step 3.} Solve the coupled Navier-Stokes equation (the first and the second equation of $(\ref{the73})$ ) to obtain $\textbf{v}_i$.}
		\ENDFOR
	\end{algorithmic}
\end{algorithm}

\subsubsection{Multiscale method}
For the auxiliary time periodic problem (\ref{the21}), we propose the semi-discretization scheme of $U$
\begin{equation}\label{the160}
	\begin{split}
		&\frac{(\triangle T)^{-\alpha}}{\Gamma(2-\alpha)}\left[a_0U_j-\sum\limits_{k=1}^{j-1}(a_{j-k-1}-a_{j-k})U_k-a_{j-1}U_0\right]=\varepsilon \sum\limits_{i=1}^{M/T}\Delta t R((\textbf{v}_{U_{j-1}})_i,U_{j-1}),\\
		&U_0=u_0.
	\end{split}
\end{equation}

The semi-discretization scheme of the time periodic Navier-Stokes equations is as follows
\begin{equation}\label{the74}
	\begin{split}
		&\text{div} \ (\textbf{v}_{U_j})_i=0\\ &\rho\left(\frac{(\textbf{v}_{U_j})_i-(\textbf{v}_{U_j})_{i-1}}{\triangle t}+((\textbf{v}_{U_j})_{i-1}\cdot\nabla)(\textbf{v}_{U_j})_i\right)=\text{div} \ \sigma((\textbf{v}_{U_j})_i,(p_{U_j})_i)+\textbf{f}_i\\
		&\textbf{v}_{U_j}(0)=\textbf{v}_{U_j}(1), \quad in \ \Omega(U_j).
	\end{split}
\end{equation}

To solve the multiscale problem (\ref{the21}), it is necessary to identify the time-periodic solution. We first prove a lemma which provides an iterative method to find the time-periodic solution. Although the lemma is for the continuous Navier-Stokes equations, it can be proved in a very similar way for the temporal discrete scheme of the Navier-Stokes equations. Our numerical experiments demonstrate that the iterative method based on this lemma works very well in finding the initial value of the time-periodic problem of the Navier-Stokes equations.

\begin{lemma}
	\label{lemma4}
	For a fixed $U$ (flow domain is fixed), let $\textbf{v}^{U}(t)$ be the solution of the initial value problem and $\textbf{v}_U(t)$ be the time-periodic solution. Let $\textbf{v}^U_0=\textbf{v}_0$ be the initial trial value. We have the following result
	\begin{equation}\label{the75}
		\begin{split}
			\Vert \textbf{v}^U(n)-\textbf{v}_U(0)\Vert^2_{L^2}\leq (e^{-C})^{\frac{n(n+1)}{2}}\Vert \textbf{v}_0-\textbf{v}_U(0)\Vert^2_{L^2}
			\rightarrow 0 \ (n\rightarrow\infty).
		\end{split}
	\end{equation}
	In other words the solution of the initial value problem fastly approaches the initial value of the periodic problem under the uniqueness assumption (Assumption \ref{assumption2}).
	\begin{proof}
		For a fixed $U$, let $\textbf{w}(t)=\textbf{v}^U(t)-\textbf{v}_U(t)$. We have the following governing equations
		\begin{equation}\label{the76}
			\begin{split}
				&\textrm{div} \ \textbf{w}=0, \ \rho\frac{\partial\textbf{w}}{\partial t}+\rho\left((\textbf{v}^{U}\cdot\nabla)\textbf{v}^U-(\textbf{v}_{U}\cdot\nabla)\textbf{v}_U\right)=\rho\nu\Delta \textbf{w}-\nabla p^{U}+\nabla p_{U}, \ in \ \varOmega(U)\\
				&\textbf{w}(0)=\textbf{v}_0-\textbf{v}_U(0),\quad \textbf{w}=0 \ on \ \partial\varOmega(U).
			\end{split}
		\end{equation}
		
		Multiplying the second equation of (\ref{the76}) by $\textbf{w}$ and integrating with respect to the space variables on the domain $\varOmega(U)$, using Poincar\'{e} inequality, we obtain 
		\begin{equation}\label{the77}
			\begin{split}
				\frac{d}{dt}\left\Vert \textbf{w}(t)\right\Vert^2_{L^2}+C\Vert  \textbf{w}(t)\Vert^2_{L^2}\leq 0.
			\end{split}
		\end{equation}
		
		Multiplying (\ref{the77}) by $e^{Ct}$, then integrating from $t-1$ to $t$, we have 
		\begin{equation}\label{th78}
			\begin{split}
				\Vert \textbf{v}^U(t)-\textbf{v}_U(t)\Vert^2_{L^2}\leq e^{-Ct}\Vert \textbf{v}^U(t-1)-\textbf{v}_U(t-1)\Vert^2_{L^2}.
			\end{split}
		\end{equation}
		
		Let $t=1,2...n$, we have
		\begin{equation}\label{the79}
			\begin{split}
				\Vert \textbf{v}^U(n)-\textbf{v}_U(n)\Vert^2_{L^2}=\Vert \textbf{v}^U(n)-\textbf{v}_U(0)\Vert^2_{L^2}\leq e^{-nC}\Vert \textbf{v}^U({n-1})-\textbf{v}_U(0)\Vert^2_{L^2}.
			\end{split}
		\end{equation}
		
		For a constant $0<e^{-C}<1$, we obtain
		\begin{equation}\label{the80}
			\begin{split}
				\Vert \textbf{v}^U(n)-\textbf{v}_U(0)\Vert^2_{L^2}\leq (e^{-C})^{\frac{n(n+1)}{2}}\Vert \textbf{v}_0-\textbf{v}_U(0)\Vert^2_{L^2} \rightarrow 0 \ (n\rightarrow \infty).
			\end{split}
		\end{equation}
	\end{proof}
\end{lemma}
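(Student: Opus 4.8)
The plan is to compare the forward (initial-value) solution $\textbf{v}^U$ with the time-periodic solution $\textbf{v}_U$ on the \emph{fixed} domain $\varOmega(U)$ by a standard energy--Gr\"onwall argument: show that over each unit period the difference of the two flows contracts by a fixed factor $e^{-C}<1$, and then iterate over the $n$ successive periods, using that the periodic solution satisfies $\textbf{v}_U(n)=\textbf{v}_U(0)$.

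First I would set $\textbf{w}(t)=\textbf{v}^U(t)-\textbf{v}_U(t)$. Since $\textbf{v}^U$ and $\textbf{v}_U$ solve the incompressible Navier--Stokes system on the same fixed domain with the same force $\textbf{f}$ and (after the usual lifting) the same homogeneous Dirichlet data, $\textbf{w}$ is divergence-free, vanishes on $\partial\varOmega(U)$, and obeys the difference equation in \eqref{the76}, with the pressures entering only through a gradient. Testing the momentum equation with $\textbf{w}$ and integrating over $\varOmega(U)$: the pressure term drops by $\mathrm{div}\,\textbf{w}=0$ together with $\textbf{w}|_{\partial\varOmega(U)}=0$; the viscous term produces $\rho\nu\|\nabla\textbf{w}\|_{L^2}^2$; and the convective difference, rewritten as $(\textbf{v}^U\!\cdot\!\nabla)\textbf{v}^U-(\textbf{v}_U\!\cdot\!\nabla)\textbf{v}_U=(\textbf{w}\!\cdot\!\nabla)\textbf{v}_U+(\textbf{v}^U\!\cdot\!\nabla)\textbf{w}$, contributes after integration only the trilinear term $((\textbf{w}\!\cdot\!\nabla)\textbf{v}_U,\textbf{w})$, because $((\textbf{v}^U\!\cdot\!\nabla)\textbf{w},\textbf{w})=0$ again by incompressibility and the boundary condition. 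Estimating $|((\textbf{w}\!\cdot\!\nabla)\textbf{v}_U,\textbf{w})|$ by Ladyzhenskaya's/Sobolev's inequality together with the uniform bound $\|\textbf{v}_U\|_{H^2(\varOmega)}\le C_{A2}$ of Assumption~\ref{assumption2}, and using the Poincar\'e inequality to control $\|\textbf{w}\|_{L^2}$ by $\|\nabla\textbf{w}\|_{L^2}$, I would absorb this term into the viscous dissipation and arrive at $\tfrac{d}{dt}\|\textbf{w}(t)\|_{L^2}^2+C\|\textbf{w}(t)\|_{L^2}^2\le 0$ with $C>0$.

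To conclude, I would multiply by $e^{Ct}$, so that $t\mapsto e^{Ct}\|\textbf{w}(t)\|_{L^2}^2$ is non-increasing; integrating over one period gives the per-period contraction $\|\textbf{w}(t)\|_{L^2}^2\le e^{-C}\|\textbf{w}(t-1)\|_{L^2}^2$. Since $\textbf{v}_U$ is $1$-periodic, $\textbf{w}(n)=\textbf{v}^U(n)-\textbf{v}_U(0)$ and $\textbf{w}(0)=\textbf{v}_0-\textbf{v}_U(0)$; chaining the contractions over $t=1,2,\dots,n$ then yields the decay estimate \eqref{the75} and hence convergence of $\textbf{v}^U(n)$ to the periodic initial value. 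The same chain of estimates carries over verbatim to the backward-Euler time-discrete Navier--Stokes scheme \eqref{the74}, the version actually used in the algorithm. The main obstacle is the nonlinear convective term: one must verify it is genuinely absorbed by the dissipation so that the right-hand side of the energy inequality carries \emph{no} positive remainder (an estimate of the form $\tfrac{d}{dt}\|\textbf{w}\|_{L^2}^2\le C_1\|\textbf{w}\|_{L^2}^2$ would not force decay). This is precisely where one needs the viscosity to dominate the size of $\textbf{v}_U$, i.e. the moderate-Reynolds regime under which the periodic solution of Assumption~\ref{assumption2} is unique; in the linear Stokes setting used elsewhere in the paper the term is absent and the argument is immediate.
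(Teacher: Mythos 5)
Your proposal is essentially the paper's own proof: the same difference function $\textbf{w}=\textbf{v}^U-\textbf{v}_U$, the same energy estimate with Poincar\'e leading to $\frac{d}{dt}\Vert\textbf{w}\Vert^2_{L^2}+C\Vert\textbf{w}\Vert^2_{L^2}\leq 0$, the same integrating factor $e^{Ct}$, and the same iteration over unit periods using $\textbf{v}_U(n)=\textbf{v}_U(0)$. In fact you are more careful than the paper on the one genuinely delicate point, namely that the trilinear term $((\textbf{w}\cdot\nabla)\textbf{v}_U,\textbf{w})$ must be absorbed into the viscous dissipation with \emph{no} positive remainder, which requires the moderate-Reynolds/small-data regime of Assumption \ref{assumption2}; the paper's proof passes over this silently. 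One discrepancy worth noting: your (correct) per-period contraction $\Vert\textbf{w}(t)\Vert^2_{L^2}\leq e^{-C}\Vert\textbf{w}(t-1)\Vert^2_{L^2}$, chained over $n$ periods, yields $e^{-Cn}\Vert\textbf{w}(0)\Vert^2_{L^2}$, not the super-exponential rate $(e^{-C})^{n(n+1)/2}$ stated in \eqref{the75}. The paper reaches the stronger exponent via \eqref{th78}--\eqref{the79}, which assert a factor $e^{-Ct}$ over the single interval $[t-1,t]$; integrating $\frac{d}{dt}(e^{Ct}\Vert\textbf{w}\Vert^2_{L^2})\leq 0$ over that interval only gives $e^{-C}$, so the printed rate appears to be an overstatement and your $e^{-Cn}$ bound is the one the argument actually supports. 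This is immaterial for the conclusion, since geometric decay already gives $\textbf{v}^U(n)\to\textbf{v}_U(0)$.
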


Based on \textbf{Lemma \ref{lemma4}}, we can give an initial trial value (usually the inflow velocity) and the tolerance $\tau>0$ to reach the initial value (and the end value) of the periodic solution, and then perform the calculation. In this process, we calculate the error every 1 second until the given tolerance is reached. The algorithm is as follows
\begin{algorithm}[H]
	\caption{Algorithm for the identification of the initial value of time-periodic solutions.}
	\label{Algorithm2}
	\begin{algorithmic}
		\STATE{Given an inflow velocity $\textbf{v}_0$, let $\tau>0$ be a given tolerance and let $n=0$. }
		\STATE{\textbf{Step 1.} Solve Navier-Stokes equations $(\ref{the74})$  with $U_j$ to obtain $\textbf{v}^{U_j}(n)=\textbf{v}^{U_j}\left(\frac{i\triangle tM}{T}\right)$.}
		\STATE{\textbf{Step 2.} Calculate the error\\
			
			$\epsilon:=\Vert \textbf{v}^{U_j}(n)-\textbf{v}^{U_j}(n-1)\Vert^2_{L^2(\varOmega(U_j))}=\Vert \textbf{v}^{U_j}(\frac{i\triangle tM}{T})-\textbf{v}^{U_j}(\frac{i\triangle tM}{T}-1)\Vert^2_{L^2(\varOmega(U_j))}$.}
		\STATE{\textbf{Step 3.} If $\epsilon< \tau$, stop; else, Update $n=n+1$, go to \textbf{Step 1}.}
		\STATE{\textbf{Step 4.} Output the last periodic part $\textbf{v}^{U_j}(\frac{i\triangle tM}{T}-1),...,\textbf{v}^{U_j}(\frac{i\triangle tM}{T})$.}
	\end{algorithmic}
\end{algorithm}

Besides the fast convergence we would like to point out that Algorithm 4.2 also makes it easy or effective to implement the solver of Navier-Stokes with the software COMSOL Multiphysics.

Now, the fast and slow scales of (\ref{the74}) have been separated. Having Algorithm \ref{Algorithm2} to find the initial value for the time-periodic problem $(\ref{the74})$, we introduce the following multiscale algorithm.
\begin{algorithm}[H]
	\caption{Fractional multiscale algorithm.}
	\label{Algorithm3}
	\begin{algorithmic}
		\STATE{Let $U_0=u_0$ and $j=1,2,...N$. Input $j=1$.}
		\STATE{\textbf{Step 1.} For a fixed $U_{j-1}$, solve the time periodic auxiliary problem (\ref{the74}) to obtain $(\textbf{v}_{U_{j-1}})_i$ based on Algorithm \ref{Algorithm2}.}
		\STATE{\textbf{Step 2.} Calculate the integral reaction term in equation $(\ref{the160})$ 
			$$R_{j-1}=\frac{T}{M}\sum\limits_{i=1}^{(M/T)}R\left(U_{j-1},(\textbf{v}_{U_{j-1}})_i \right)$$.}
		\STATE{\textbf{Step 3.} Step forward $U_{j-1}\rightarrow U_{j}$ and go to \textbf{Step 1} with the $\mathcal{L}1$ approximation
			$$U_{j}=\Gamma(2-\alpha)(\triangle T)^{\alpha}\varepsilon R_{j-1}+\sum\limits_{k=1}^{j-1}(a_{j-k-1}-a_{j-k})U_{k}+a_{j-1}U_0$$.}
	\end{algorithmic}
\end{algorithm}

\subsection{Implementations}
In the ALE formulation, the moving boundary may be transfered into a fixed boundary and then $u(t)$ which defines the moving boundary enters the flow equations, which are usually much more complex than the original flow equations (See, for example, (\ref{the28}) after the ALE transformation). In \cite{Frei2020} their numerical algorithm is based on the ALE and these ALE transferred flow equations are further simplified into quasilinear ODEs in their theoretical analysis. As seen earlier, our algorithms are based on simple front-tracking and the flow equations remain as their original simple form. This plus the finite different scheme and the iterative algorithm to identify the initial value of the time-periodic problem (See Section \ref{section4.1}) makes it particularly easy to use existing software to implement our algorithms, for example, a combination of the finite element software COMSOL Multiphysics 5.6 and MATLAB. In our numerical experiments later, we will test examples of 2-D Navier-Stokes problem, we connect the finite element software COMSOL Multiphysics 5.6 and MATLAB 2016b to realize the coupled computation. The time discretization is performed in MATLAB, and the spatial discretization is handled in COMSOL by using adaptive mesh. Specifically, in the multiscale computation, we use MATLAB to solve the concentration $U$, and fix the flow domain with the $U$. Then we pass the domain information to COMSOL and use it to find the time-periodic solution $\textbf{v}_{U}$, and the obtained $\textbf{v}_U$ is returned to MATLAB to complete an implementation cycle. This is very effective for our numerical experiments of direct and multiscale algorithms, and saves time in programming. This procedure may also be directly applied to engineering applications. 

To verify the performance of the multiscale method, we compare its solution with that of the original initial value problem (\ref{the1}) (called direct solution). In doing this direct computation, we set the time step $\triangle t=1/20$ and use 210 spatial elements in the domain. For the multiscale method, the number of spatial elements remains the same, and we change the macro-scale time step $\Delta T$ and the micro-scale time step $\Delta t$ to test the efficiency and accuracy of the proposed scheme. We would like to point out that the mesh is locally refined at the deformed boundary. Compared with the model with integer derivatives, using the direct method to achieve long-term calculation for the model with fractional derivatives is almost impossible due to the nonlocality (or the integral in the time interval $[0,t]$), therefore the total computation time $T$ is usually not too large in our error comparison.

\subsection{Error Analysis of the semi-discrete scheme}
\label{Error Analysis of the semi-discrete scheme}
There are already a lot of error analysis available for numerical schemes of Navier-Stokes equation (See e.g. \cite{Geveci1989convergence, Ju2002global} and references therein), although very few are for the Navier-Stokes equations coupled with a fractional differential equation. In this section we analyse the error of the temporal semi-discrete scheme for the fractional part of the system, assuming that the temporal discrete error estimate has been done for the integer order Navier-Stokes part. In view of this, we make the following assumption.

\begin{assumption}
	\label{assumption4}
	We assume that $\tau>0$ is the tolerance in Algorithm \ref{Algorithm2} to solve the time periodic auxiliary problem. The error of the fast scale first-order linear backward Euler scheme can be expressed as follows, for all time step $t_i$,
	\begin{equation}\label{the81}
		\begin{split}
			\Vert \textbf{v}_U(t_i)-(\textbf{v}_U)_i\Vert_{H^1}+\left(\Delta t\sum\limits_{i=1}^{M/T}\Vert \textbf{v}_U(t_i)-(\textbf{v}_U)_i\Vert_{H^2}^2\right)^{\frac{1}{2}}\leq C_{A4}\Delta t+\tau,
		\end{split}
	\end{equation}
	where $C_{A4}$ is a constant.
\end{assumption}

\begin{remark}
	We can adapt the temporal error estimates in, for example, \cite[equations (3.32)-(3.40)]{Li2022temporal} for the initial value problem of Navier-Stokes equation, by assuming sufficient regularity and compatibility conditions and constant density. For the time-periodic problem the estimates may be obtained as well by additional applying the technique of initial value estimates (\ref{the38})-(\ref{the39}) to obtain the initial value error first.
\end{remark}

\begin{theorem}
	\label{theorem6}
	Let $u\in C^2[0,T]$ and $U\in C^2[0,T]$ be the solutions to the original problem (\ref{the1}) and the time averaged problem (\ref{the21}), respectively, $U_j$ be the solution of the discrete effective equations (\ref{the74}). For $T_j=O(\varepsilon^{-\frac{1}{\alpha}})$, we have the following error estimate
	\begin{equation}\label{the82}
		\begin{split}
			|u(T_j)-U_j|\leq C_{T44}(\varepsilon+\Delta t+\tau+\Delta T\varepsilon^{\frac{1}{\alpha}}),
		\end{split}
	\end{equation}
	where $C_{T44}$ is a positive constant.
\end{theorem}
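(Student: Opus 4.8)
The plan is to insert the continuous effective solution $U$ of (\ref{the21}) between $u(T_j)$ and $U_j$ and to estimate the two resulting pieces with different tools:
\[
|u(T_j)-U_j|\le |u(T_j)-U(T_j)|+|U(T_j)-U_j|.
\]
The first piece is nothing but the continuous-level error already bounded in \textbf{Theorem \ref{theorem4}}; it accounts for the modeling part of (\ref{the82}). Everything else concerns the second piece, i.e.\ the temporal discretization error of the slow equation, and here the flow may be dropped from the discussion once \textbf{Assumption \ref{assumption4}} is invoked. The useful reduction is that, since $\textbf{f}$ and the periodic velocity $\textbf{v}_{U}$ are $1$-periodic in $s$, the functional $\Phi(U):=\varepsilon\int_{t}^{t+1}R(\textbf{v}_{U}(s),U)\,ds$ is independent of $t$, so the slow equation in (\ref{the21}) is the autonomous fractional ODE $D^{\alpha}_{0^+}U=\Phi(U)$ and the scheme (\ref{the160}) is exactly its explicit $\mathcal{L}1$ discretization with $\Phi$ replaced by the quadrature $\Phi_h$ assembled from the numerically computed $(\textbf{v}_{U_{j-1}})_i$.

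With this in hand I would bound the one-step defect $D^{\alpha}_{\mathcal{L}1}U(T_j)-\Phi_h(U_{j-1})$ by four contributions. (i) The Riemann-sum quadrature error $\Phi(U)-\varepsilon\Delta t\sum_i R(\textbf{v}_{U}(t_i),U)=O(\varepsilon\,\Delta t)$, using (\ref{the7}) and the $H^2$-regularity of $\textbf{v}_{U}$ from \textbf{Assumption \ref{assumption2}}. (ii) The fast-scale discretization error $\|\textbf{v}_{U_{j-1}}(t_i)-(\textbf{v}_{U_{j-1}})_i\|$, which by \textbf{Assumption \ref{assumption4}} and (\ref{the7}) costs $O(\varepsilon(\Delta t+\tau))$. (iii) The $\mathcal{L}1$ truncation term from (\ref{the72}), which is $O(\Delta T^{2-\alpha})$ per step; since $D^{\alpha}_{0^+}U=\Phi(U)$ with $|\Phi|\le\varepsilon C_{A1a}$ forces $U''$ to inherit the factor $\varepsilon$ away from $t=0$ (using $u,U\in C^2[0,T]$), this contribution is dominated by the others over the horizon $[0,T]$. (iv) The explicit time-lag, i.e.\ evaluating the reaction at $U_{j-1}$ rather than $U(T_j)$: this is bounded by $|\Phi(U(T_j))-\Phi(U(T_{j-1}))|\le C\varepsilon\,|U(T_j)-U(T_{j-1})|$, because $\Phi$ is Lipschitz with constant $O(\varepsilon)$ (from (\ref{the7}) and the bound on $\partial\textbf{v}_u/\partial u$ in \textbf{Lemma \ref{lemma2}}), and the integrated identity $U(t)-u_0=I^{\alpha}_{0^+}\Phi(U)(t)$ together with $U'\ge0$ controls the increments $|U(T_j)-U(T_{j-1})|$ on the range $T_j=O(\varepsilon^{-1/\alpha})$ (in the spirit of \textbf{Lemma \ref{lemma11}}); once summed against the $\mathcal{L}1$ weights this is what produces the $\Delta T\,\varepsilon^{1/\alpha}$ term.

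Setting $e_j=U(T_j)-U_j$, inverting the $\mathcal{L}1$ stencil (whose weights are nonnegative and sum to one, so the recursion is a genuine fractional quadrature), and using the $O(\varepsilon)$-Lipschitz continuity of $\Phi$, these defects combine into a discrete fractional Gr\"onwall inequality
\[
|e_j|\le C\big(\varepsilon+\Delta t+\tau+\Delta T\,\varepsilon^{1/\alpha}\big)+C\varepsilon\,\Delta T\sum_{k=1}^{j-1}\omega_{j-k}\,|e_k|,
\]
which I would close by the discrete fractional Gr\"onwall lemma: the accumulated kernel weight up to $T_j=O(\varepsilon^{-1/\alpha})$ is $O(T_j^{\alpha})=O(\varepsilon^{-1})$, so the amplification factor $E_{\alpha}(C\varepsilon T_j^{\alpha})$ stays $O(1)$, which is the cancellation that keeps $C_{T44}$ free of $\varepsilon$. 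The main obstacle, and the part deserving the most care, is precisely this $\varepsilon$-power accounting inside the fractional Gr\"onwall estimate: one must check that, after being convolved against the $\mathcal{L}1$ weights and multiplied by the $O(\varepsilon^{-1})$ fractional-integration factor over the long window, the lag defect in (iv) collapses exactly to $\Delta T\,\varepsilon^{1/\alpha}$ and not to something larger (the larger per-step increments near $t=0$ being suppressed because they carry only the small $\mathcal{L}1$ weights in the final sum), that the $\mathcal{L}1$ truncation term in (iii) is genuinely absorbed, and that the \emph{explicit} rather than implicit evaluation of the reaction does not spoil stability — all of which hinge on the smallness of $\varepsilon$, the nonnegativity and boundedness of $R$, and the $C^2$ hypothesis on $u$ and $U$, the latter absorbing the initial-layer behaviour near $t=0$.
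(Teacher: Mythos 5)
Your architecture is the same as the paper's: insert $U(T_j)$ and use Theorem~\ref{theorem4} for the modeling error, write the error equation for the explicit $\mathcal{L}1$ stencil, split the per-step defect into quadrature, fast-scale (Assumption~\ref{assumption4}), $\mathcal{L}1$-truncation and explicit-lag contributions, and close with a discrete fractional Gr\"onwall argument in which the accumulated weight $T_j^{\alpha}=O(\varepsilon^{-1})$ must be paid for. The gap is precisely the item you flag as ``deserving the most care'' and then leave open: the $\varepsilon$-powers you assign to the lag and truncation defects are too weak to survive that $O(\varepsilon^{-1})$ amplification. The paper opens its proof by establishing the sharp regularity estimates $|U'(t)|=O(\varepsilon^{1/\alpha})$ and $|U''(t)|=O(\varepsilon^{2/\alpha})$ (see (\ref{the280}) and the display after it): writing $U'=D^{1-\alpha}_{0^+}D^{\alpha}_{0^+}U$, splitting the kernel as $(t-s)^{\alpha-1}=(t-s)^{2\alpha-1}(t-s)^{-\alpha}$ and reusing $\int_0^t(t-s)^{-\alpha}U'(s)\,ds=O(\varepsilon)$ gives $|U'(t)|\le C\varepsilon^2 t^{2\alpha-1}=O(\varepsilon^{1/\alpha})$ on $t\le O(\varepsilon^{-1/\alpha})$ (this is where $\alpha\ge 1/2$ is silently needed), and one more differentiation plus a fractional Gr\"onwall inequality gives the $U''$ bound.

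Your substitutes --- increments controlled ``in the spirit of Lemma~\ref{lemma11}'', i.e.\ $|U(T_j)-U(T_{j-1})|=O(\varepsilon\Delta T)$, and $U''$ merely ``inheriting the factor $\varepsilon$'' --- yield per-step defects $C\varepsilon^{2}\Delta T$ (lag) and $C\varepsilon\,\Delta T^{2-\alpha}$ (truncation); after the $O(\varepsilon^{-1})$ amplification these become $\varepsilon\Delta T$ and $\Delta T^{2-\alpha}$, and for the macro steps of interest, $1\ll\Delta T\le O(\varepsilon^{-1/\alpha})$, neither is controlled by $\Delta T\varepsilon^{1/\alpha}+\Delta t+\tau$: the first can be as large as $\varepsilon^{1-1/\alpha}\to\infty$ and the second is $\gg 1$. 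With the sharp estimates the defects are $C\varepsilon^{1+1/\alpha}\Delta T$ and $C\varepsilon^{2/\alpha}\Delta T^{2-\alpha}$, and the bookkeeping then closes exactly as you describe (the paper's induction (\ref{the86})--(\ref{the90}) uses $a_{j-1}>(1-\alpha)j^{-\alpha}$ to realize the $T_j^{\alpha}$ factor). So the proof is not complete without those two regularity bounds. A minor further point: both your route and the paper's actually deliver $\varepsilon^{1/2}$ from Theorem~\ref{theorem4} (cf.\ (\ref{the92})), not the $\varepsilon$ written in (\ref{the82}).
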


\begin{proof}
	We first consider the regularity estimate of $U$. Based on the definition of the Caputo fractional derivative, by direct computation, we have
	\begin{equation}\label{the280}
		\begin{split}
			&|U'(t)|=|D_{0^+}^{1-\alpha}D_{0^+}^{\alpha}U(t)|\leq\frac{\varepsilon}{\Gamma(\alpha)}\int_{0}^t (t-s)^{\alpha-1}\left|d_s\int_{0}^1 R(\textbf{v}_{U(s)}(r),U(s))drds\right|\\
			&\leq C\varepsilon\int_{0}^t (t-s)^{\alpha-1}U'(s)ds=C\varepsilon\int_{0}^t (t-s)^{2\alpha-1}(t-s)^{-\alpha}U'(s)ds\\
			&\leq  C\varepsilon^2\cdot t^{2\alpha-1}= O(\varepsilon^{\frac{1}{\alpha}}).
		\end{split}
	\end{equation}
	Moreover, it is easy to see $U'(0)=0$, and $(d_t\int_{0}^1 R(\textbf{v}_{U(t)}(s),U(t))ds)\big|_{t=0}=0$.
	
	Therefore, we have the following estimate of $U''(t)$ by integrating by parts
	\begin{equation}
		\begin{split}
			&|U''(t)|=|d_{t}D_{0^+}^{1-\alpha}D_{0^+}^{\alpha}U(t)|=\left|\frac{\varepsilon}{\Gamma(\alpha)}\frac{d}{dt}\int_{0}^{t}(t-s)^{\alpha-1}d_s\int_{0}^1 R(\textbf{v}_{U(s)}(r),U(s))drds\right|\\
			&\leq \left|\frac{\varepsilon}{\Gamma(\alpha+1)}\int_{0}^{t}(t-s)^{\alpha-1}d_s^2\int_{0}^1 R(\textbf{v}_{U(s)}(r),U(s))drds\right|\\
			&\leq C\varepsilon\int_{0}^{t}(t-s)^{\alpha-1}|U''(s)|ds+C\varepsilon^\frac{2}{\alpha}.
		\end{split}
	\end{equation}
	
	Applying Gronwall inequality to the above inequality, we obtain $|U''(t)|= O(\varepsilon^{\frac{2}{\alpha}})$.
	
	Let $E_j=U(T_j)-U_j$. Applying Taylor expansions of $U(T_j)$ around $T_{j-1}$, \textbf{Lemma \ref{lemma2}}, we obtain the error equation
	\begin{equation}\label{the83}
		\begin{split}
			&\frac{(\triangle T)^{-\alpha}}{\Gamma(2-\alpha)}\left[a_0E_j-\sum_{k=1}^{j-1}(a_{j-k-1}-a_{j-k})E_k-a_{j-1}E_0\right]\\
			&=\varepsilon\Delta t\sum\limits_{i=1}^{M/T}\left(R(\textbf{v}_{U(T_{j-1})}(t_i),U(T_{j-1}))-R(\left(\textbf{v}_{U_{j-1}}\right)_i,U_{j-1})\right)\\
			&+C\varepsilon^{1+\frac{1}{\alpha}}\Delta T+C\varepsilon\Delta t+C\varepsilon^{\frac{2}{\alpha}}(\Delta T)^{2-\alpha}.
		\end{split}
	\end{equation}
	
	With the help of the Lipschitz condition of $R$, we have
	\begin{equation}\label{the84}
		\begin{split}
			&|E_j|\leq \sum_{k=1}^{j-1}(a_{j-k-1}-a_{j-k})|E_k|+\Gamma(2-\alpha)(\Delta T)^{\alpha}\Delta t\varepsilon\sum\limits_{i=1}^{M/T}\left\Vert \textbf{v}_{U(T_{j-1})}(t_i)-(\textbf{v}_{U_{j-1}})_i\right\Vert_{H^2}\\ &+\Gamma(2-\alpha)(\Delta T)^{\alpha}\varepsilon|E_{j-1}|+C(\Delta T)^{\alpha}\Delta t\varepsilon+C(\Delta T)^{1+\alpha}\varepsilon^{1+\frac{1}{\alpha}}+a_{j-1}|E_0|.
		\end{split}
	\end{equation}
	
	Combining \textbf{Lemma \ref{lemma2}}, Assumption \ref{assumption4} and Cauchy-Schwarz inequality, we obtain
	\begin{equation}\label{the85}
		\begin{split}
			&\Delta t\sum\limits_{i=1}^{M/T}\left\Vert \textbf{v}_{U(T_{j-1})}(t_i)-(\textbf{v}_{U_{j-1}})_i\right\Vert_{H^2}\leq \Delta t\sum\limits_{i=1}^{M/T}\left\Vert \textbf{v}_{U(T_{j-1})}(t_i)-\textbf{v}_{U_{j-1}}(t_i)\right\Vert_{H^2}\\
			&+\Delta t\sum\limits_{i=1}^{M/T}\left\Vert \textbf{v}_{U_{j-1}}(t_i)-(\textbf{v}_{U_{j-1}})_i\right\Vert_{H^2}\leq C|E_{j-1}|+C\Delta t+\tau.
		\end{split}
	\end{equation}
	
	(\ref{the84}), (\ref{the85}), and the inequality $a_{j-1}>(1-\alpha)j^{-\alpha}$ lead to the following inequality,
	\begin{equation}\label{the86}
		\begin{split}
			&|E_j|\leq \sum_{k=1}^{j-1}(a_{j-k-1}-a_{j-k})|E_k|+C(\Delta T)^\alpha\varepsilon |E_{j-1}|+a_{j-1}\left[|E_0|+\frac{C(\Delta T)^\alpha\varepsilon}{a_{j-1}}\left(\Delta t+\Delta T\varepsilon^{\frac{1}{\alpha}}+\tau\right)\right]\\
			&\leq \sum_{k=1}^{j-1}(a_{j-k-1}-a_{j-k})|E_k|+C(\Delta T)^\alpha\varepsilon |E_{j-1}|+a_{j-1}\left[|E_0|+C(j\Delta T)^\alpha\varepsilon\left(\Delta t+\Delta T\varepsilon^{\frac{1}{\alpha}}+\tau\right)\right].
		\end{split}
	\end{equation}
	
	Now, we will prove the following estimate by induction,
	\begin{equation}\label{the87}
		\begin{split}
			|E_j|\leq& C\left(|E_0|+\Delta t+\Delta T\varepsilon^{\frac{1}{\alpha}}+\tau\right).
		\end{split}
	\end{equation}
	
	For $j=1$, we can easily obtain $|E_1|\leq C\left(|E_0|+\Delta t+\Delta T\varepsilon^{\frac{1}{\alpha}}+\tau\right)$.
	
	Suppose now (\ref{the87}) holds for $1,2,...,j-1$, we need then to prove that it holds also for $j$. From (\ref{the86}), we obtain
	\begin{equation}\label{the89}
		\begin{split}
			|E_j|\leq& C\sum_{k=1}^{j-1}(a_{j-k-1}-a_{j-k})\left(|E_0|+\Delta t+\Delta T\varepsilon^{\frac{1}{\alpha}}+\tau\right)+C(\Delta T)^\alpha\varepsilon |E_{j-1}|\\
			&+a_{j-1}\left[|E_0|+C(T_j)^\alpha\varepsilon\left(\Delta t+\Delta T\varepsilon^{\frac{1}{\alpha}}+\tau\right)\right]\\
		\end{split}
	\end{equation}
	
	For $T_j=O(\varepsilon^{-\frac{1}{\alpha}})$, we have
	\begin{equation}\label{the90}
		\begin{split}
			|E_j|\leq& C\left(\sum_{k=1}^{j-1}(a_{j-k-1}-a_{j-k})+a_{j-1}\right)\left(|E_0|+\Delta t+\Delta T\varepsilon^{\frac{1}{\alpha}}+\tau\right)+C(\Delta T)^\alpha\varepsilon |E_{j-1}|\\
			\leq& Ca_0\left(|E_0|+\Delta t+\Delta T\varepsilon^{\frac{1}{\alpha}}+\tau\right)+C(\Delta T)^\alpha\varepsilon\left(|E_0|+\Delta t+\Delta T\varepsilon^{\frac{1}{\alpha}}+\tau\right)\\
			\leq& C\left(|E_0|+\Delta t+\Delta T\varepsilon^{\frac{1}{\alpha}}+\tau\right),
		\end{split}
	\end{equation}
	which completes the proof of the estimate (\ref{the87}). With $E_0=0$, we obtain 
	\begin{equation}
		|E_j|\leq C\left(\Delta t+\Delta T\varepsilon^{\frac{1}{\alpha}}+\tau\right).
	\end{equation}
	
	The result follows by \textbf{Theorem \ref{theorem4}}, we have
	\begin{equation}\label{the92}
		\begin{split}
			|u(T_j)-U_j|\leq& |u(T_j)-U(T_j)|+|U(T_j)-U_j|\leq C_{T37a}\varepsilon^{\frac{1}{2}}+C\left(\Delta t+\Delta T\varepsilon^{\frac{1}{\alpha}}+\tau\right)\\
			\leq& C_{T44}(\varepsilon^{\frac{1}{2}}+\Delta t+\tau+\Delta T\varepsilon^{\frac{1}{\alpha}}).
		\end{split}
	\end{equation}
\end{proof}

\begin{remark}
	Here we assume $\frac{1}{2}\leq\alpha<1$ in the theorem is only to avoid the weak singularity at $t=0$ in derivtives of $U(t)$ (see (\ref{the280})). If $0<\alpha<1/2$, we expect that there will generally be a weak singularity in derivatives of $U(t)$ near $t=0$ and the convergence order of the $\mathcal{L}1$ finite difference scheme will be affected there too. A further discussion on this is beyond the scope of this paper. We refer to \cite{Stynes2021survey} (Section 4) for a discussion on using a so-called graded temporal steps near $t=0$ to improve the convergence order near the weak singularity.
\end{remark}	

\section{Numerical experiments}
\label{section5}
In this section, we carry out two numerical experiments to test the accuracy and effectiveness of the proposed multiscale method. We first perform a simple numerical test with a system of ODEs as a simplified model, such that the exact solution can be obtained to verify the accuracy of the algorithm. Then we carry out numerical tests on the atherosclerotic problem with a plaque growth modeled by Navier-Stokes equations to show the advantages of the multiscale method. Finally, we evaluate the effect of the fractional parameter $\alpha$, which describes the growth rate of plaque.

\subsection{Test of ODEs system}
To obtain the exact solutions of $v(t)$ and $u(t)$, the construction of reaction term $R$ is very simple without using the form in (\ref{the3}).
\begin{equation}\label{the93}
	\begin{split}
		&\frac{d}{dt}v(t)+u(t)v(t)=2\pi \cos(2\pi t)+u(\sin(2\pi t)+2), \quad \frac{d^{0.8}}{dt^{0.8}}u(t)=\varepsilon v(t)=5\cdot 10^{-4}v(t)\\
		&v(0)=2,\quad u(0)=1,\quad t\in [0,14001\approx \varepsilon^{-\frac{1}{\alpha}}].
	\end{split}
\end{equation}

This fractional problem has the exact solution: \\
$v(t)=sin(2\pi t)+2$, and  $u(t)=1+\varepsilon\left(\frac{2t^{\alpha}}{\Gamma(\alpha+1)}+2\pi t^{\alpha+1}E_{2,\alpha+2}(-4\pi^2 t^2)\right)$. $E_{\mu,\nu}(z)=\sum\limits_{k=0}^{\infty}\frac{z^{k}}{\Gamma(\mu k+\nu)}$ is the generalized Mittag–Leffler (M-L) function \cite{prabhakar1971singular}.

We set the macro step size $\Delta T=1000$, micro step size $\Delta T=1/100$ and trial value $(v_{U_j})_0=0.5$ in the simulation. It can be seen from Figure \ref{compare1} that the numerical results of the multiscale method agree well with the exact solution. This also verifies the accuracy of Algorithm \ref{Algorithm1}-\ref{Algorithm3}. The micro equation is discretized using the standard first order backward Euler scheme, and we further choose different macro step size to test the convergence rate of the discrete scheme (\ref{the160}) of the macro equation. Quantitative error measurements and convergence rates are shown in Table \ref{table1}. It is observed that our numerical method is first-order convergence for macro variable, which is consistent with the results of numerical analysis.

\begin{figure}[H]
	\centering
	\includegraphics[scale=0.47]{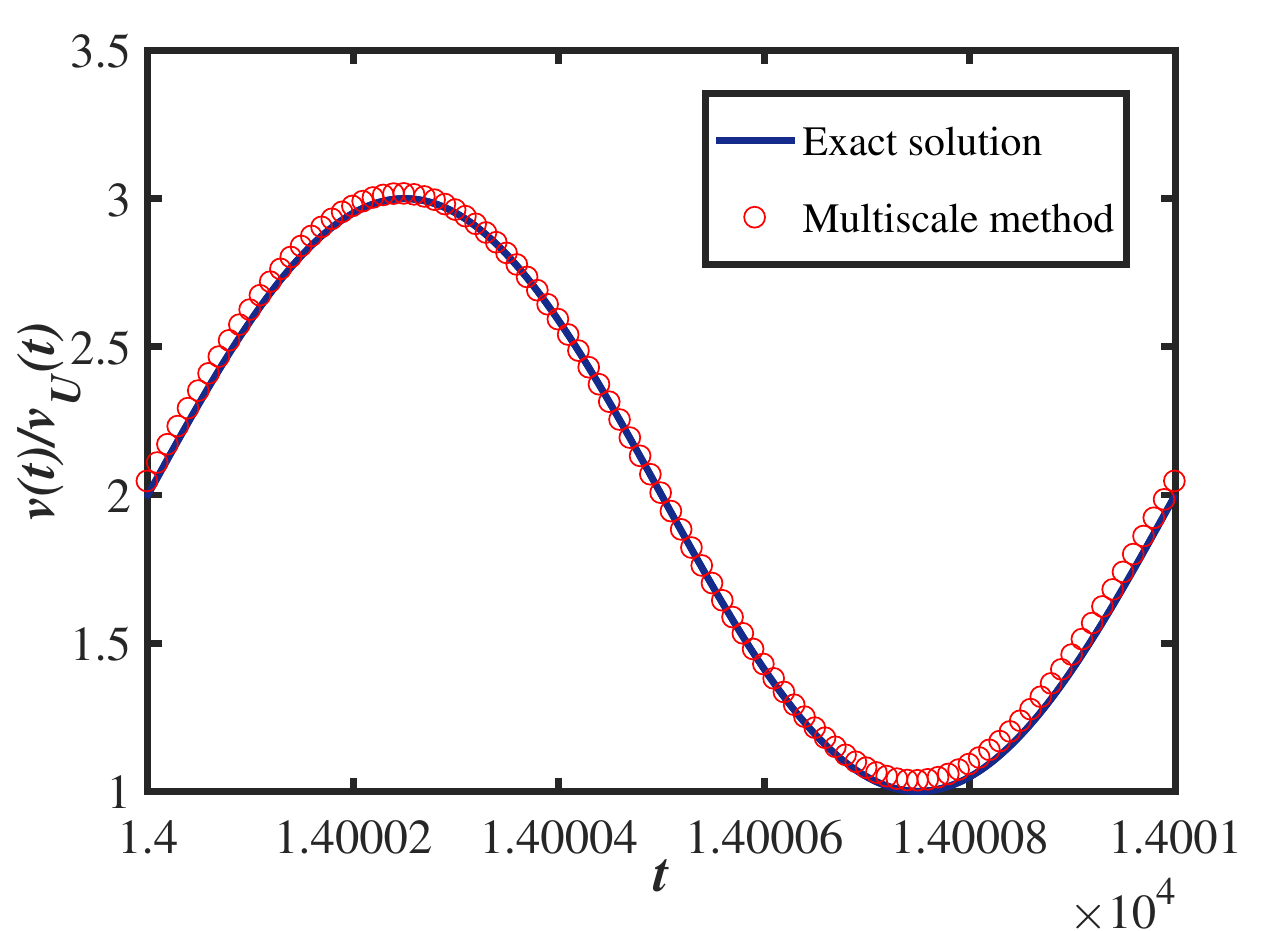}
	\includegraphics[scale=0.47]{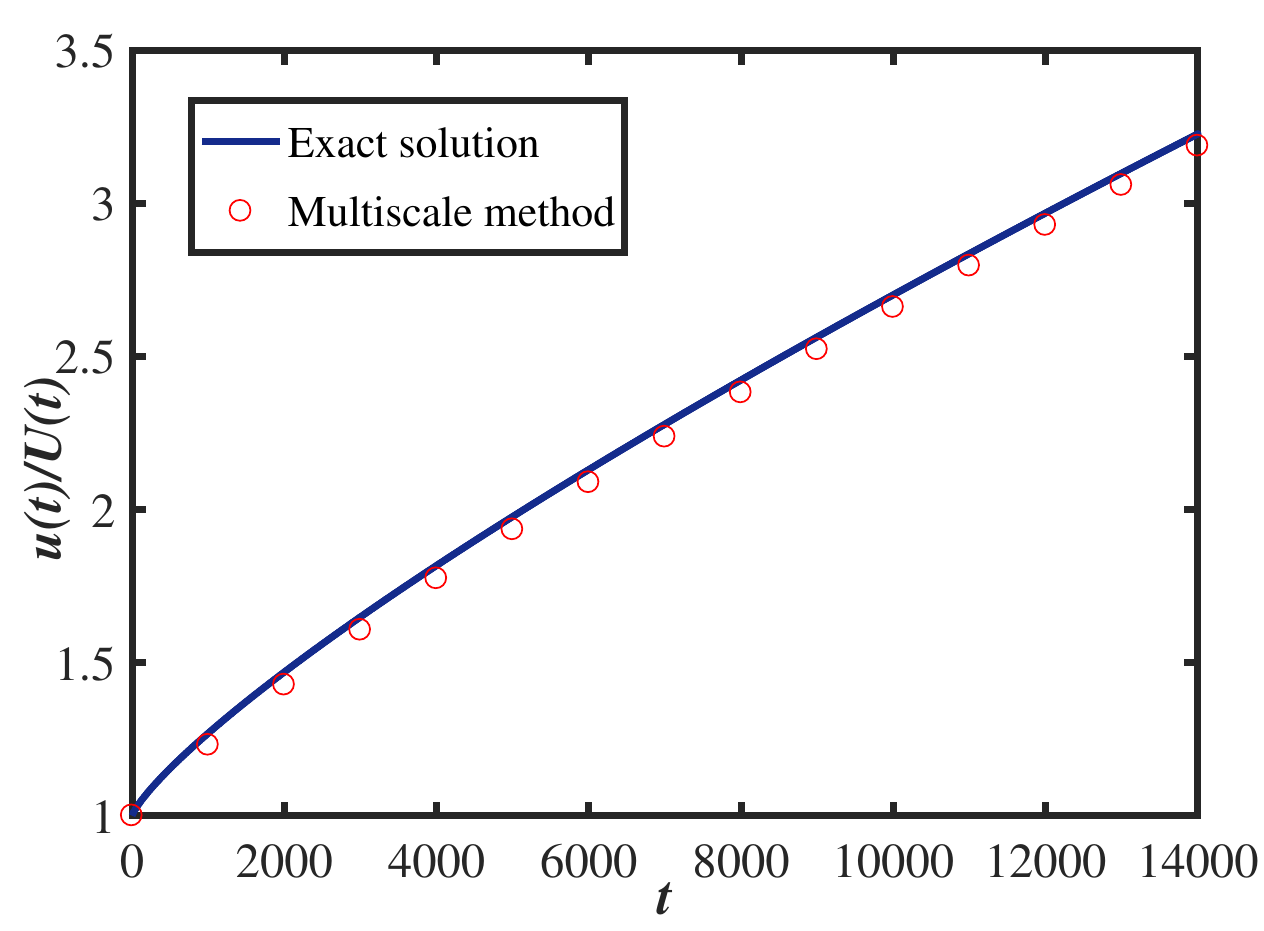}
	\caption{Comparison of the numerical solution and exact solution with $\triangle t=1/100$ and $\triangle T=1000$.}
	\label{compare1}
\end{figure}

\begin{table}[H]
	{\footnotesize
		\caption{Error of macro variable $U$ at $t=14000$ s, convergence rates and CPU time with fixed $\Delta t=1/100$ and $\Delta T\varepsilon^{\frac{1}{\alpha}}\to\Delta t$.}
		\label{table1}
		\begin{center}
			\begin{tabular}{|c|c|c|c|} \hline  		
				$\triangle T$ & Error & Order & CPU time \\ \hline
				2000 & 7.320e-2 & - & 0.32 s \\
				1000 & 3.941e-2 & 0.90 & 0.60 s \\  
				500 & 2.041e-2 & 0.95 & 1.13 s \\  
				250 & 1.004e-2  & 1.02  & 1.64 s \\  \hline
			\end{tabular}
		\end{center}
	}
\end{table}

\subsection{Test of 2-D flow problem}
In this subsection, we carry out numerical tests for Navier-Stokes flow. The results obtained by the direct method are used as a reference to verify the effect of the multiscale method and give the convergence rates.

We test the full Navier-Stokes system in (\ref{the1}). The flow domain is shown in Figure \ref{Pic1} with the shape function $\gamma(u,x)=ue^{-x^2}$.
\begin{equation}\nonumber
	\begin{split}
		\varOmega(u)=\left\{(x,y): |x|< 5 \ \text{cm}, \ -2 \ \text{cm}<y< (2-ue^{-x^2}) \ \text{cm}\right\}.
	\end{split}
\end{equation}

The time periodic Dirichlet condition is set on the left inflow boundary, which appears to make the problem periodic. We set $\textbf{v}_{in}=30\left(1-\frac{y^2}{4}\right)\sin^2(\pi t)$ cm/s. On the right outflow boundary we set a frequently used pressure condition: $-p\vec{n}+\rho\nu\frac{\partial \textbf{v}}{\partial \vec{n}}=0$. The parameters in the fluid and reaction term are the same as in \cite{Frei2020} which are claimed to mimic the real fluid-structure interaction problem: $\rho=1 \ \text{g}/\text{cm}^3$, $\nu=0.04 \ \text{cm}^2/\text{s}$ and $\sigma_0=30 \ \text{g}/(\text{cm} \ \text{s}^2)$. Moreover, we set fractional parameter $\alpha=0.6$, $\varepsilon=8\cdot10^{-4} \ {\text{s}^{-1}}$, computation time $T=8000 \ \text{s}$ and $u(0)=u_0=0.2$. 

For the multiscale method, we set the tolerance for the time periodic solution as $\Vert\textbf{v}_U(t+1)-\textbf{v}_U(t)\Vert^2_{L^2(\varOmega(U))}\leq\tau = 10^{-6}$. We choose different macro step sizes for comparison with the direct method and define Error$=|u_j-U_j|$. The errors and CPU time are shown in Figure 6. Even for not so large computing time $T=8000$ s, the CPU time to solve such a fluid-structure interaction problem using the direct method is 77 hours, which demonstrates the excellent performance of our multiscale method. 

We test the convergence rate of Scheme (\ref{the160})-(\ref{the74}). The result of numerical analysis (\ref{the82}) shows that the effect of the macro step size $\Delta T$ is dominant when $\Delta T>\varepsilon^{-1}\Delta t$, and vice versa. We fix the micro step size $\Delta t=1/40$ and macro step size $\Delta T=80$ to test the convergence order of $\Delta T$ and $\Delta t$, respectively. We then refine the macro time step or micro time step by 2 and calculate the error at $T=8000$ s. It is observed from Figure \ref{rate} that our numerical method achieves the expected first-order convergence for $\Delta T$ and $\Delta t$.

It should be noted that the direct long-term fractional calculation poses challenge to the computer capacity (in terms of CPU time and RAM) since the solution has nonlocal dependence on previous steps. The multiscale method constructed and theoretically justified in this paper significantly reduces the computational cost associated with the micro-scale steps needed in the direct computation and can thus solve the problem very effectively.

\begin{figure}[H]
	\centering
	\includegraphics[scale=0.38]{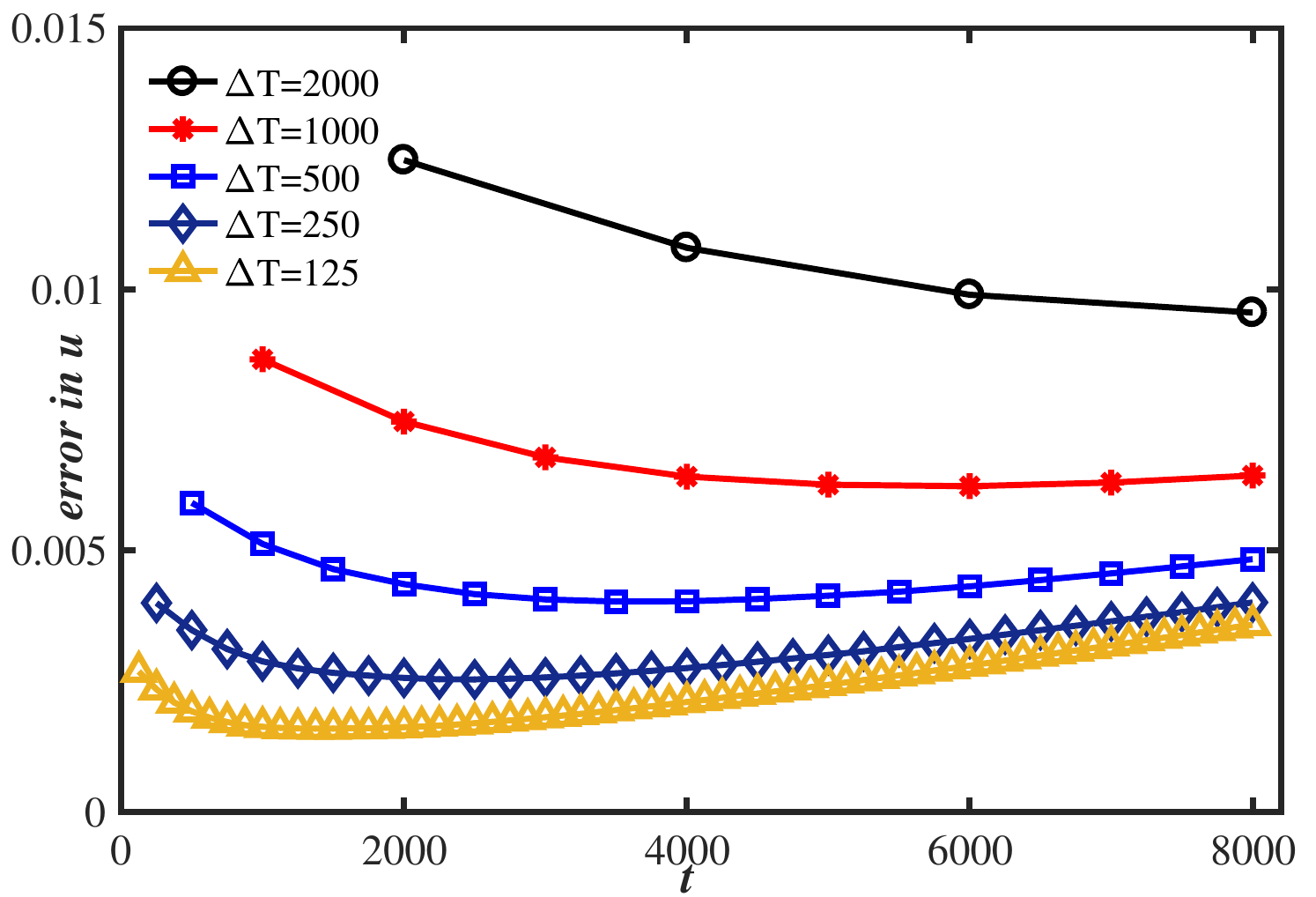}
	\includegraphics[scale=0.40]{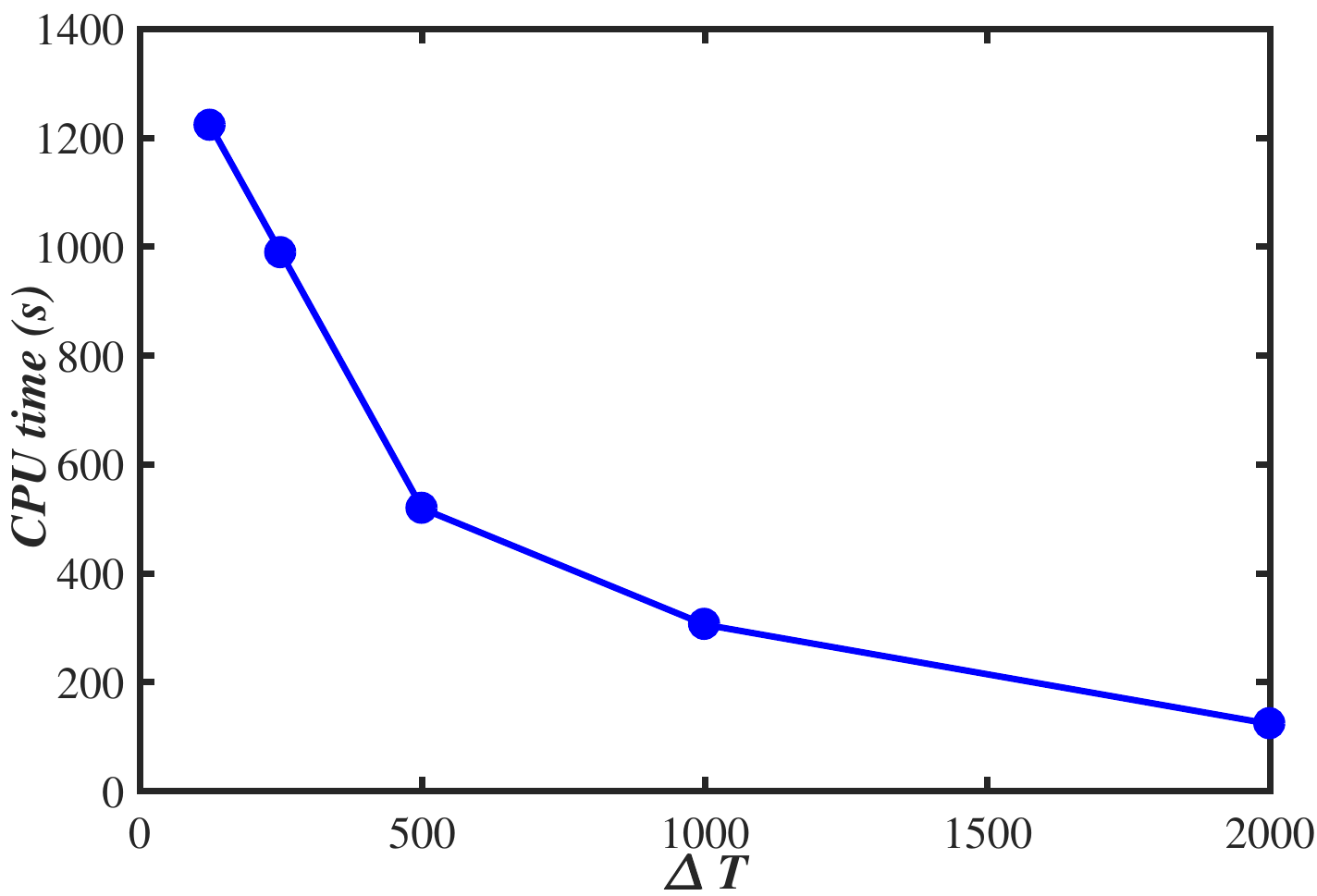}
	\caption{Error result (left) and CPU time (right)of different macro step size $\triangle T$ with fixed $\triangle t=1/20$ s.}
	\label{compare4}
\end{figure}

\begin{figure}[H]
	\centering
	\includegraphics[scale=0.46]{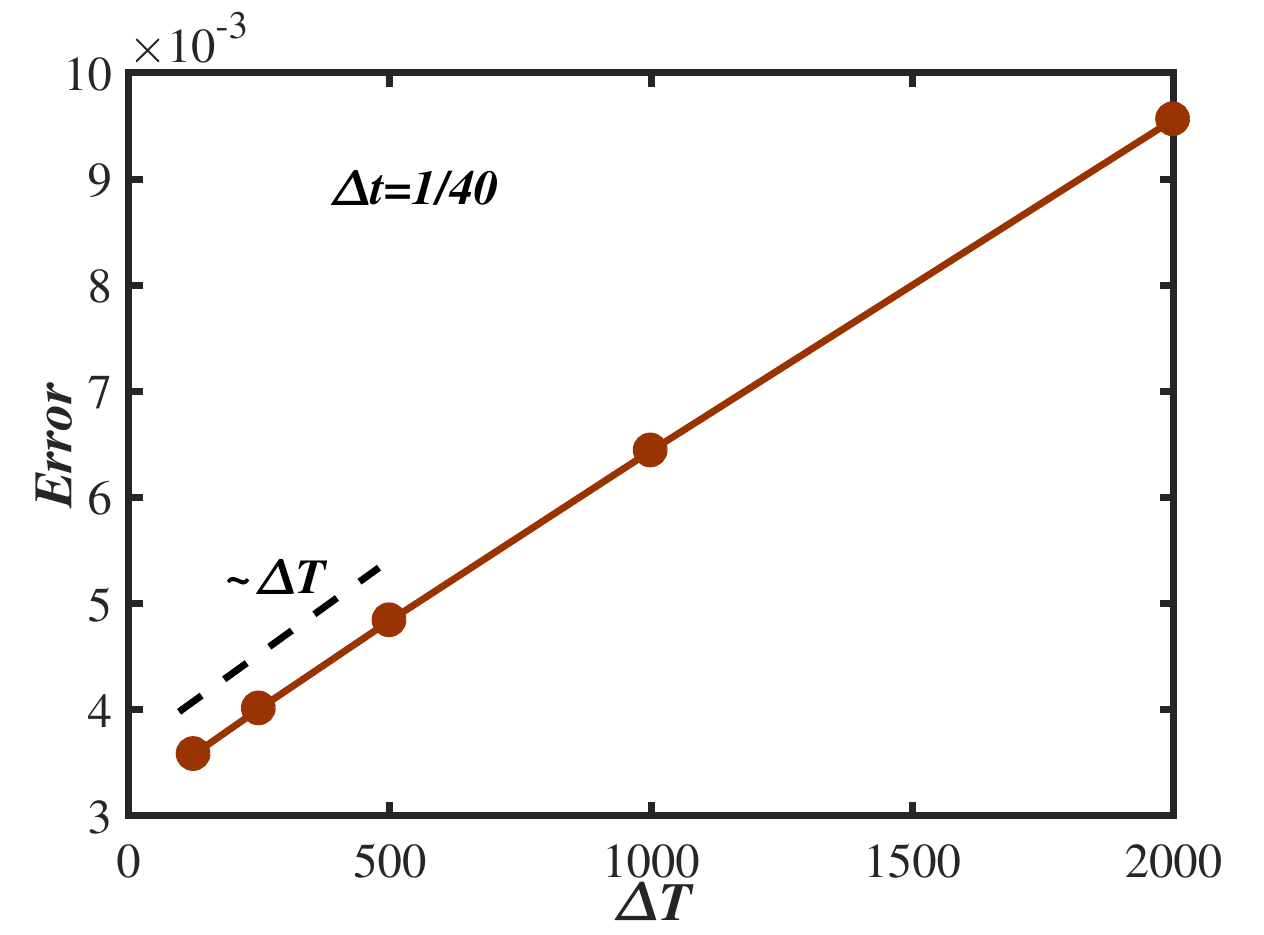}
	\includegraphics[scale=0.46]{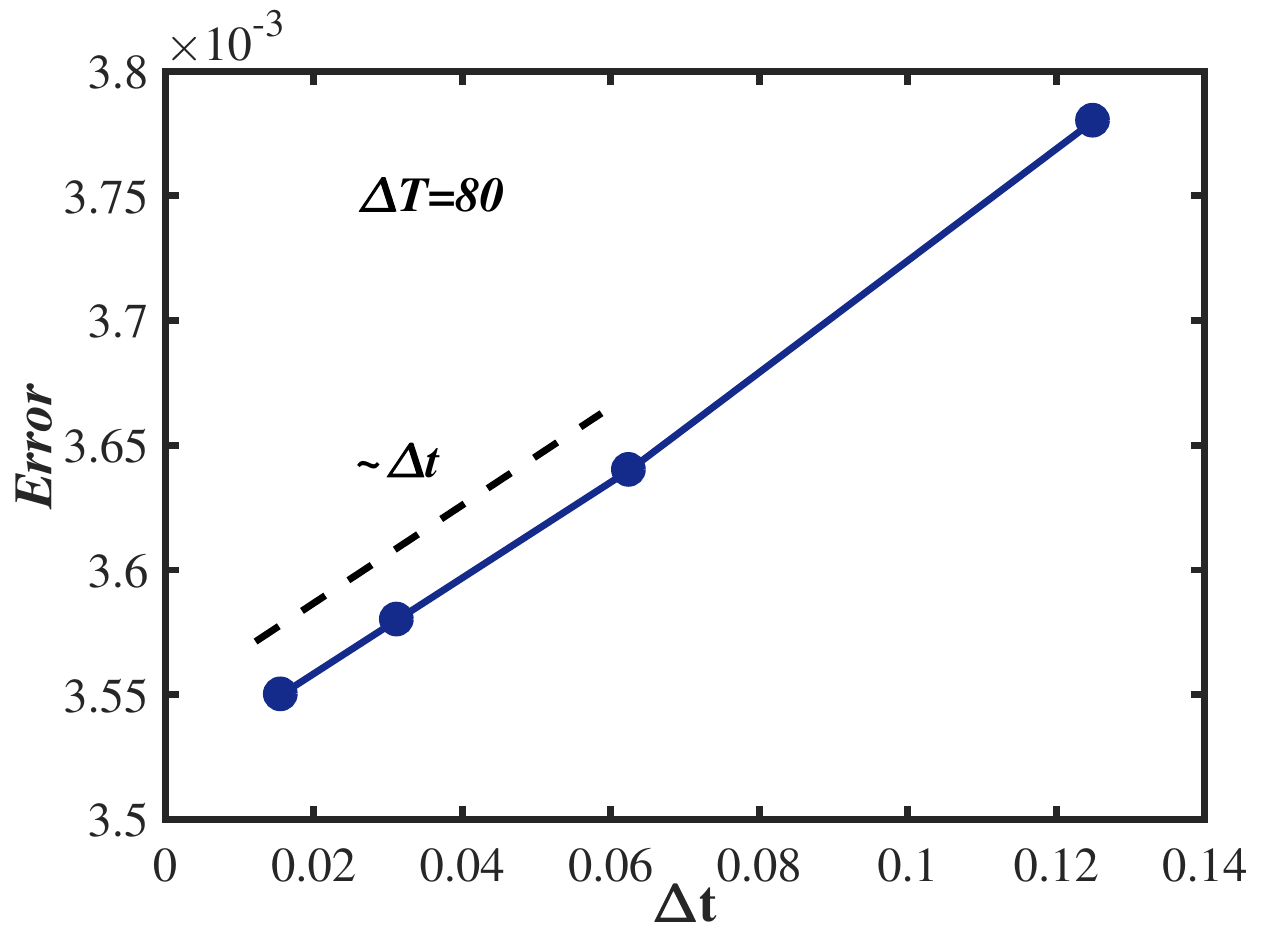}
	\caption{Convergence rate of the multiscale method at time $T=8000$ s.}
	\label{rate}
\end{figure}

\subsection{The effect of fractional order parameter on plaque growth}
Finally, we consider the effect of the fractional parameter $\alpha$ on the atherosclerotic plaque growth and hemodynamics. Here, the vessel width is proportionally enlarged to 3 cm, and the variable domain is given by 
\begin{equation}\nonumber
	\begin{split}
		\varOmega(u)=\left\{(x,y): |x|< 5 \text{cm}, \ -1.5 \ \text{cm}<y< (1.5-ue^{-x^2}) \ \text{cm}\right\}.
	\end{split}
\end{equation}
The parameters in the simulation are fixed as $T=1.8\cdot 10^{6}\ \text{s}\approx 21\ \text{days}$, $\rho=1 \ \text{g}/\text{cm}^3$, $\nu=0.04 \ \text{cm}^2/\text{s}$, $\sigma_0=30 \ \text{g}/(\text{cm} \ \text{s}^2)$, $\varepsilon=2\cdot 10^{-6} \ \text{cm}/\text{s}$. The inflow boundary condition is $\textbf{v}_{in}=20\left(1-\frac{y^2}{1.5^2}\right)\sin^2(\pi t) \ \text{cm/s}$, and the outflow boundary is the pressure boundary condition. Assume that the initial concentration is $u_0=0$ (without plaque formation). We set the macro- and micro-scale temporal step sizes to be $\triangle T=4\cdot10^{4} \ $s and $\triangle t=1/20\ $s, respectively.

The velocity magnitude and plaque growth when the fractional order parameter $\alpha=0.85$ and $\alpha=0.95$ are shown in Figure \ref{compare5}, with a strong narrowing of the flow domain. We expect that the computational cost would be huge for such a large $T$ by using the direct method since the fractional derivative is involved with an integral from $0$ to $T$. For a long-term simulation, the computational cost of the multiscale method is significantly reduced. Figure \ref{compare6} shows the effect of different fractional order parameter on plaque growth. The concentration of macrophages in the vessel wall increases with the increase of $\alpha$, which leads the accumulation of foam cells and plaque formation. Therefore, $\alpha$ can be regarded as a parameter to describe the plaque growth rate, a key index in the study of atherosclerosis.

\begin{figure}[H]
	\centering
	\includegraphics[scale=0.27]{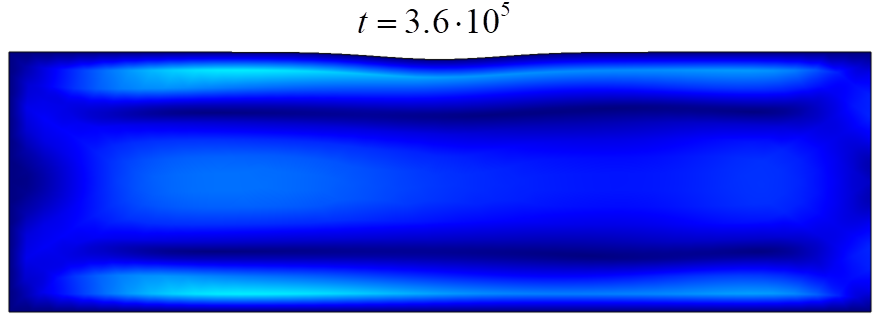}
	\includegraphics[scale=0.27]{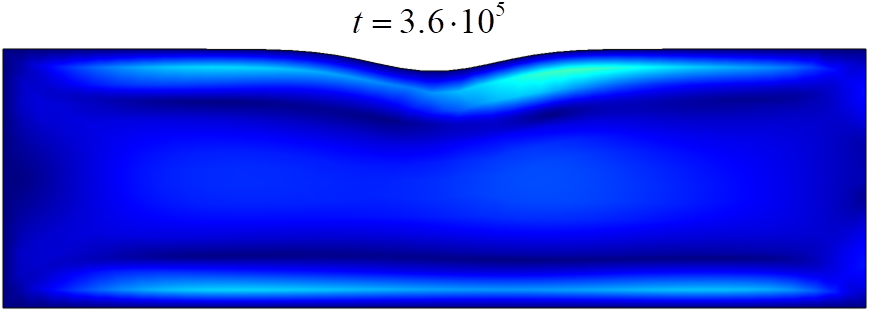}
	\quad
	\\
	\includegraphics[scale=0.27]{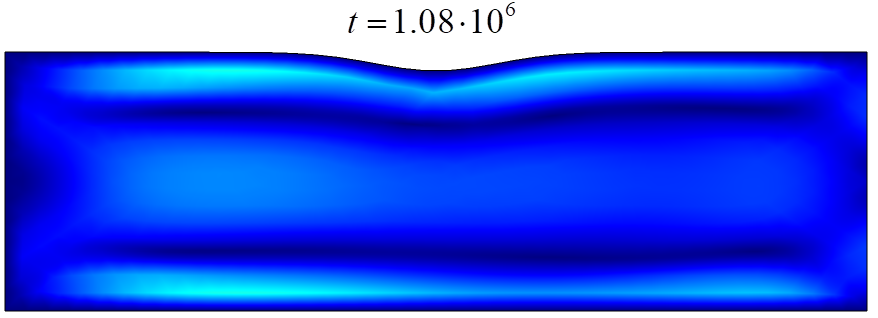}
	\includegraphics[scale=0.27]{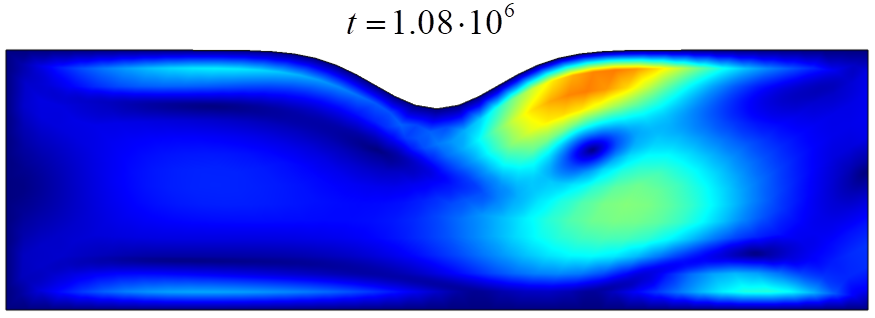}
	\quad
	\\
	\includegraphics[scale=0.27]{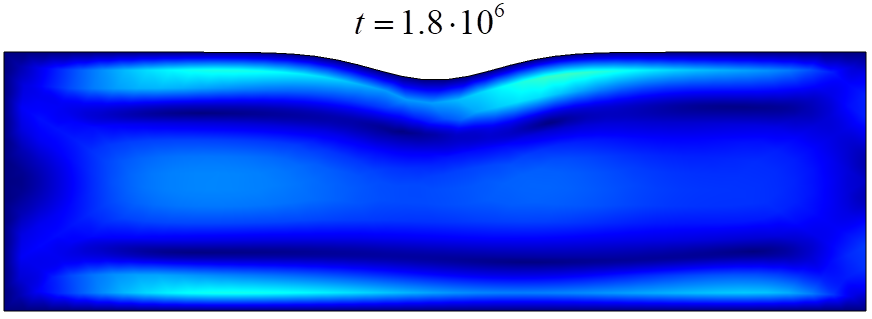}
	\includegraphics[scale=0.27]{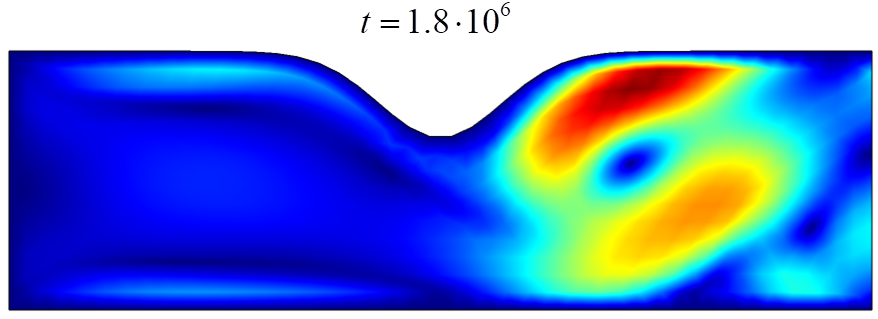}
	\caption{Snapshots of velocity field and plaque growth with $\alpha=0.85$ (left) and $\alpha=0.95$ (right).}
	\label{compare5}
\end{figure}

\begin{figure}[H]
	\centering
	\includegraphics[scale=0.45]{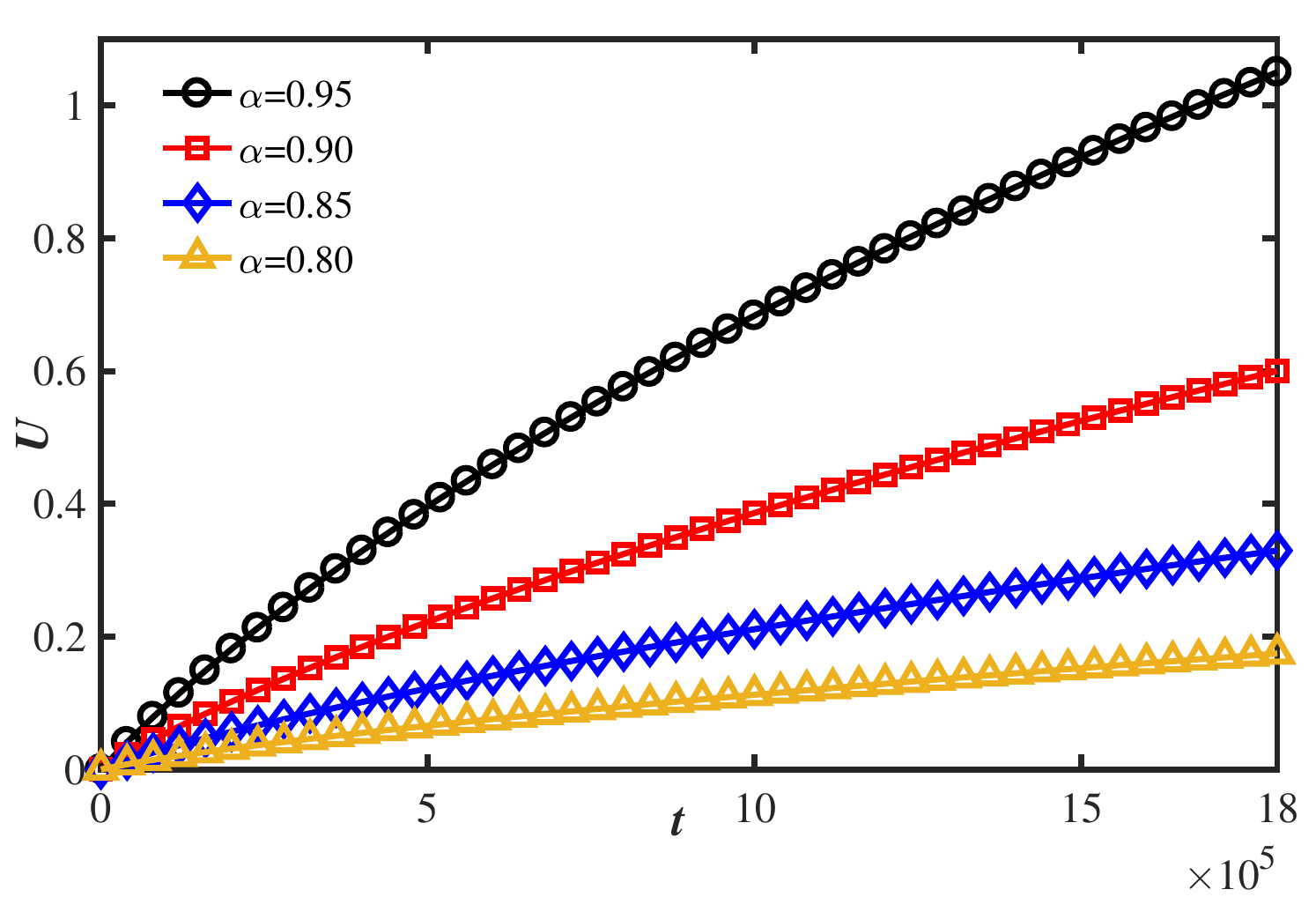}
	\caption{Effects of different fractional parameter $\alpha$ on plaque growth.}
	\label{compare6}
\end{figure}

\section{Conclusions and remarks}
\label{section6}
In this paper, we study a fluid-structure interaction problem with periodic forcing and temporal multiscale features (fast flow/slow plaque growth) and the slow variable equation contains fractional derivatives where a memory effect of the plaque evolution is included. We use the simple front tracking method to deal with the boundary growth and then formulate an auxiliary time periodic flow equation. Under the front tracking framework and for the linear Stokes flow, we analyze the error between solutions of the auxiliary periodic problem and the original problem. The error estimate is also expected to hold for the full Navier-Stokes equation. Based on this auxiliary time periodic problem, we then design an efficient multiscale algorithm and implement it in a combination of COMSOL Multiphysics and MATLAB. An iterative procedure to solve the time-periodic problem is designed and its exponential convergence is analyzed. An error analysis for a fractional order time-discrete scheme of the multiscale algorithm is also provided. Several numerical experiments are conducted to illustrate the accuracy and efficiency of the proposed multiscale algorithm. The test results for  simplified ODE systems and coupled Navier-Stokes systems show the great performance of the algorithm and that refining the macro-scale time step size can reduce the error more significantly than refining the micro-scale time step size. The final numerical example of the plaque growth problem shows the effect of different fractional order parameter on the plaque growth, and illustrates that the fractional order parameter may be used as an alternative index to reflect the plaque growth rate. 

For future work, we shall consider to include the reaction-diffusion equation into the plaque growth model (PDE/PDE system) and explore the possibility of designing and analysing an effective multiscale method.

\section*{Acknowledgments}
PL thanks Professor Thomas Richter for bringing his attention to the temporal multiscale fluid-structure interaction problem. This work is supported by the National Natural Science Foundation of China (Nos. 11861131004, 11771040, 11871339) and the Fundamental Research Funds for the Central Universities (No. 06500073).

\bibliographystyle{siamplain}
\bibliography{Ref}

\end{document}